\newcommand{\R} {\ensuremath{\mathbb{R}}}
\newcommand{\N} {\ensuremath{\mathbb{N}}}
\newcommand{\C} {\ensuremath{\mathbb{C}}}
\newcommand{\Z} {\ensuremath{\mathbb{Z}}}
\newcommand{\OO}{\mathcal{O}}
\renewcommand{\o}[1]{\overline{#1}}
\newcommand{\dq}{\overline{\partial}}
\newcommand{\wt}[1]{\widetilde{#1}}
\DeclareMathOperator{\Reg}{Reg}
\DeclareMathOperator{\Sing}{Sing}
\DeclareMathOperator{\Jac}{Jac}
\newtheorem {satz} {Satz} [section]
\newtheorem {lem} [satz] {Lemma}
\newtheorem {defn} [satz] {Definition}
\newtheorem {thm} [satz] {Theorem}
\DeclareMathOperator{\dist}{dist}
\DeclareMathOperator{\supp}{supp}
\renewcommand{\Re}{\mbox{Re }}
\renewcommand{\theta}{\vartheta}
\title[$\dq$ at regular exceptional sets] 
{About the $\dq$-equation at isolated singularities with regular exceptional set}
\author{J. Ruppenthal}
\address{Mathematisches Institut, Universit\"at Bonn, Beringstr. 1, D-53115 Bonn, Germany.}
\email{jean@math.uni-bonn.de}
\date{September 19, 2007}
\subjclass[2000]{32C36, 32W05}
\keywords{Cauchy-Riemann equations, resolution of singularities, H\"older estimates, $L^2$-estimates}
\begin{document}


\begin{abstract} 
Let $Y$ be a pure dimensional analytic variety in $\C^n$ with an isolated singularity at the origin
such that the exceptional set $X$ of a desingularization of $Y$ is regular.
The main objective of this paper is to present a technique which allows to determine
obstructions to the solvability of the $\dq$ equation in the $L^2$ respectively $L^\infty$ sense
on $Y^*=Y\setminus\{0\}$ in terms of certain cohomology classes on $X$.
More precisely, let $\Omega \subset\subset Y$ be a Stein domain with $0\in \Omega$,
$\Omega^*=\Omega\setminus \{0\}$.
We give a sufficient condition for the solvability of the $\dq$ equation in the $L^2$-sense on $\Omega^*$;
and in the $L^\infty$ sense, if $\Omega$ is in addition strongly pseudoconvex.
If $Y$ is an irreducible cone, we also give some necessary conditions and
obtain optimal H\"older estimates for solutions of the $\dq$ equation.
\end{abstract}

\maketitle

\section{Introduction}

The Cauchy-Riemann differential equations play a decisive role in complex analysis.
Fundamental questions like the Cousin problems or the Levi problem have been solved 
using solutions of the $\dq$-equation with certain estimates
on complex manifolds. But, until now, the $\dq$-theory has not been understood completely
on complex spaces with singularities which occur naturally in complex analysis.
Particularly, the solution of the Cauchy-Riemann equations could be a tool
in the solution of the Levi problem on singular Stein spaces.\\

One difficulty is the question of how to define differential forms
in the presence of singularities. 
There are basically three different approaches:

Firstly, one can use the concept of differential sheaves
that is common in algebraic geometry. 
In \cite{Rp}, it is shown
that in this algebraic setup the $\dq$-equation is not necessarily locally solvable in singular points,
i.e. the Lemma of Dolbeault is not valid.

A second method is to consider forms on some neighborhood
of an embedded variety. In this extrinsic setup, Henkin and Polyakov
obtain the following result: If $X$ is a complete intersection
in the unit ball $B\subset\C^n$, then for each $C^\infty$-form $\alpha$ on $B$ with $\dq \alpha$ vanishing on $X$,
there is a $C^\infty$-form $\beta$ on $B\setminus \Sing X$, such that $\dq \beta=\alpha$
on $X\setminus \Sing X$ (\cite{HePo}).
Counter-examples show that it is not possible to obtain good H\"older-estimates
in this extrinsic setup (cf. \cite{Rp}).\\

On the other hand, one obtains promising results in the following intrinsic setup:
Consider $\dq$-closed forms on the set of regular points of the variety, and estimate solutions 
to the $\dq$-equation on this nonsingular manifold.
Solvability of the Cauchy-Riemann equations in this setup implies solvability
in the extrinsic setup of Henkin and Polyakov (cf. \cite{Rp}).
A central point in complex analysis is the intimate
connection between the geometry of the boundary of certain domains on the one hand,
and the solvability and regularity of the Cauchy-Riemann equations on the other hand.
This is reflected in this intrinsic setup since the singularities
occur as (quite irregular) boundaries.
Aim of this paper is to contribute to the $\dq$-theory on singular Stein spaces
in this intrinsic setup.
As for the $\dq$-equation on manifolds,
we have to distinguish essentially two fields of research:

\vspace{2mm}
{\bf 1.} Solution of the $\dq$-equation with $L^2$-estimates (resp. $L^p$-estimates),

\vspace{2mm}
{\bf 2.} Solution of the $\dq$-equation with H\"older-estimates (resp. $C^{k+\alpha}$-estimates).

\vspace{2mm}
Due to the incompleteness of the metric on singular spaces,
there are several closed extensions of the $\dq$-operator (at least in the $L^2$-case).
In this paper, we will always refer to the maximal extension, i.e. the $\dq$-equation
has to be understood in the sense of distributions.

Whereas the $L^2$-theory on singular Stein spaces
has been treated in a number of 
publications by Forn{\ae}ss, Diederich,
{\O}vrelid and Vassiliadou (\cite{Fo}, \cite{DFV}, \cite{FOV1}, \cite{FOV2}, \cite{OvVa})
during the last ten years,
new promising results in the H\"older-theory were received recently:
In this area, the first exemplary result was given by Forn{\ae}ss and Gavosto in 1998 (\cite{FoGa}),
followed by a variety of examples given by Ruppenthal in 2006 (\cite{Rp2}),
the treatment of rational double points\footnote{Simple (isolated) singularities of hypersurfaces in $\C^3$}
by Acosta and Zeron (\cite{AcZe1,AcZe2}), and more general quotient varieties by Solis and Zeron (\cite{SoZe}).
The last result is an explicit $\dq$-integration formula for weighted homogeneous varieties (\cite{RuZe}),
which we will use in this paper.\\

One method to attack the $\dq$-problem on singular Stein spaces
is to use Hironaka's desingularization in order to blow up the singularities,
and to investigate the $\dq$-equation on the resulting regular complex manifold.
This approach has been pursued only once (in \cite{FOV1}).
Two major difficulties of this procedure are to control
the deformation of the metric at the singularities under desingularization,
and that the resulting space contains positive-dimensional compact complex submanifolds.
The main objective of the present paper is to develop strategies for treating these problems.
This is done in the following situation: Let $Y$ be a pure dimensional analytic set in $\C^n$ with
an isolated singularity at the origin. Then, the exceptional set $X=\pi^{-1}(\{0\})$
of a desingularization $\pi: M \rightarrow Y$ is an exceptional set in the sense of Grauert (\cite{Gr1}).
This implies that $X$ has a weakly negative normal bundle if $X$ is regular,
a fact that we will exploit. Let's explain that more precisely.\\


Let $Y$ be a pure $d$-dimensional analytic variety in $\C^n$,
and $A=Y_{sing}\subset Y$ the analytic subset of singular points of $Y$.
Let $Y^*=Y_{reg}$ be the complex manifold of the regular points of $Y$
provided with the induced metric, so that $Y^*$ is a hermitian
submanifold in $\C^n$, $|\cdot|_Y$ the induced norm on $\Lambda T^* Y^*$
and $dV_Y$ the volume element.
For $U\subset Y$ open, let $U^*:=U\setminus A \subset Y^*$.
Then, for a measurable $(r,q)$-form $\omega$ on $U^*$ and $1\leq p<\infty$,
we set 
\begin{eqnarray*}
\|\omega\|^p_{L^p_{r,q}(U^*)} &:=& \int_{U^*} |\omega|^p dV_X\ ,\\
\|\omega\|_{L^\infty_{r,q}(U^*)} &:=& \mbox{ess} \sup_{\substack{z\in U^*}} |\omega|(z).
\end{eqnarray*}

Note that this definition
coincides with the definition of the concerned
function spaces in almost all treatments of the $\dq$-equation
on singular spaces.
In this paper, we will study solvability and regularity of the
$\dq$-equation
on the set of regular points $Y^*$. 
All differential
equations have to be understood in the sense of distributions.

Our observations are based on Hironaka's desingularization of algebraic varieties
over fields of characteristic zero. Particularly, every reduced complex space
can be desingularized, and every reduced closed complex subspace 
of a complex manifold admits an embedded desingularization.
Precisely, in our case, there exists a complex manifold
$M$ of dimension $d$ and a holomorphic projection
$\pi: M \rightarrow Y$
satisfying the following properties:
\begin{itemize}
\item $X=\pi^{-1}(A)$ is a hypersurface in $M$ with only normal crossings, i.e.
for each point $P\in X$, there are local coordinates $z_1, ..., z_d$, such that
in a neighborhood of $P$, $X$ is the zero set of $h(z)=z_1 \cdots z_k$, where $1\leq k\leq d$.

\item $\pi: M\setminus X \rightarrow Y \setminus A$ is biholomorphic.

\item $\pi$ is a proper mapping.
\end{itemize}

For the proof and more detailed information about Hironaka's desingularization,
we refer to \cite{AHL}, \cite{BiMi} and \cite{Ha}.
This approach to the $\dq$-problem on singular analytic sets has been already
pursued by Forn{\ae}ss, {\O}vrelid and Vassiliadou in their paper \cite{FOV1}:
Let $d_A$ be the distance to $A$ in the ambient $\C^n$.
Then, for $U\subset Y$ open, $U^*=U\setminus A$ and $N\in \N$,
they define weighted $L^2$-norms
\begin{eqnarray*}
\|\omega\|^2_{N,L^2_{p,q}(U^*)} &:=& \int_{U^*} |\omega|^2 d_A^{-N}dV_X.
\end{eqnarray*}
The main result of Forn{\ae}ss, {\O}vrelid and Vassiliadou may be stated as follows:

\begin{thm}\label{thm:FOV1}
Let $Y$ be as above, and $\Omega\subset\subset Y$ an open relatively compact Stein domain in $Y$.
Then for every $N_0\geq 0$, there exists $N\geq 0$
such that if $f$ is a $\dq$-closed $(p,q)$-form on $\Omega^*=\Omega\setminus A$, 
$q>0$, with $\|f\|_{N,L^2_{p,q}(\Omega^*)}<\infty$,
there is $v\in L^{2,loc}_{p,q-1}(\Omega^*)$ solving $\dq v=f$, with $\|v\|_{N_0,L^2_{p,q-1}((\Omega')^*)}<\infty$
for each $\Omega'\subset\subset \Omega$,
and for every $\Omega'\subset\subset \Omega$, there is a solution of this kind satisfying
$\|v\|_{N_0,L^2_{p,q-1}((\Omega')^*)} \leq C \|f\|_{N,L^2_{p,q}(\Omega^*)}$, where $C>0$ 
depends only on $\Omega'$, $N$ and $N_0$.
\end{thm}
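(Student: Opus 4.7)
My plan is to reduce the problem to Stein theory via Hironaka's resolution $\pi: M \to Y$. Let $\wt\Omega := \pi^{-1}(\Omega)$ and $X := \pi^{-1}(A) \subset \wt\Omega$. I would lift $f$ to a $\dq$-closed form on $\wt\Omega \setminus X$, use the weighted hypothesis to extend this trivially across $X$ as an $L^2$-form (with respect to a smooth hermitian metric $g_M$ on $M$) with high vanishing order at $X$, invert $\dq$ in $L^2$ by working on a Stein complement of $X$, and push the resulting solution back down to $\Omega^*$. The gap $N - N_0$ should absorb three separate losses: the degeneration of the pullback metric $\pi^* g_Y$ relative to $g_M$, the finite loss of vanishing order incurred by $L^2$-solving, and the same metric comparison applied in reverse in the push-down step.

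\textbf{Lifting and metric comparison.} Fix $g_M$ on $M$. Near any $P \in X$, choose coordinates in which $X = \{z_1\cdots z_k = 0\}$. A Lojasiewicz-type estimate for the proper holomorphic map $\pi$ gives a lower bound for $d_A \circ \pi$ in terms of the distance to $X$ in $M$, while in the same chart the pointwise norm $|\pi^*f|^2_{\pi^*g_Y}$ and the volume form $\pi^*dV_Y$ differ from their $g_M$-analogues by explicit monomials in the $z_j$ (produced by the Jacobian of $\pi$ together with the collapsing of the pullback metric along $X$). Combining these comparisons one checks that for $N$ chosen sufficiently large in terms of $N_0$ and the resolution data, $\wt f := \pi^*f$ extends by zero across $X$ to a $\dq$-closed form in $L^2_{p,q}(\wt\Omega, g_M)$, in fact with any prescribed high order of vanishing at $X$.

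\textbf{Solving and pushing down.} The compact analytic set $X \subset \wt\Omega$ prevents a direct application of H\"ormander to $\wt\Omega$, but $\wt\Omega \setminus X$ is biholomorphic to the Stein space $\Omega \setminus A$. I would therefore apply H\"ormander's $L^2$-estimate on $\Omega\setminus A$ with a plurisubharmonic weight obtained by adding a positive multiple of $-\log d_A$ to a strictly plurisubharmonic exhaustion of $\Omega$ (equivalently, a weight on $\wt\Omega$ pulled up from $M$ that blows up at $X$); the high order of vanishing of $f$ at $A$ furnished by Step~2 makes $f$ integrable against this weight, producing a solution $v$ on $\Omega^*$ with an $L^2$-bound against the same weight. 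Restricting to $\Omega' \subset\subset \Omega$, where the exhaustion is bounded, the estimate reduces to a comparison between the H\"ormander weight and $d_A^{-N_0}$ up to a fixed number of powers of $d_A$ coming from Step~2, and this is where $N$ is finally pinned down in terms of $N_0$. I expect the main technical obstacle to be precisely this bookkeeping near $X$: the components of the normal-crossings divisor may enter $\pi^* h_Y$ (where $h_Y$ is a local defining function of $A$) with different multiplicities, so $N$ must dominate the worst combination of these multiplicities together with the bidegree $(p,q)$ and $N_0$, uniformly over a finite atlas of charts on $M$ covering $X$.
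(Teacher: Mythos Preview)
There is a genuine gap in Step~3. You assert that ``$\wt\Omega \setminus X$ is biholomorphic to the Stein space $\Omega \setminus A$'' and then apply H\"ormander with the weight ``a positive multiple of $-\log d_A$''. Both parts fail. Already in the simplest model --- $Y$ smooth, $\pi$ the blow-up of $0\in\C^d$, $d\ge 2$ --- one has $\wt\Omega\setminus X \cong B\setminus\{0\}\subset\C^d$, which is \emph{not} Stein and not even weakly pseudoconvex: any radial psh function $g(|z|^2)$ must have $g$ nondecreasing, so it cannot tend to $+\infty$ at the origin. Correspondingly, $-\log|z|^2$ is plurisuperharmonic on $\C^d\setminus\{0\}$ for $d\ge 2$ (its Levi form is the negative of the pullback of the Fubini--Study form), hence your proposed weight is not psh. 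On the resolution side this is the same obstruction: since the normal bundle of the exceptional divisor $X$ is negative, $-\log|s|_h^2$ for a defining section $s$ of $[X]$ has \emph{negative} Levi form near $X$, so there is no psh function on $\wt\Omega$ that blows up along $X$. H\"ormander's $L^2$-estimate therefore does not apply on $\Omega\setminus A$ (or on $\wt\Omega\setminus X$) with the weight you propose, and no alternative psh weight with the required blow-up exists.

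The paper does not prove this theorem --- it is quoted from \cite{FOV1} --- but it describes the method: ``Hironaka's desingularization and cohomological arguments in the spirit of Grauert''. Your Steps~1--2 (lifting plus Lojasiewicz/Jacobian bookkeeping) are indeed the opening moves of \cite{FOV1}. The point of departure is that \cite{FOV1} works on the $1$-convex manifold $\wt\Omega$ itself, not on $\wt\Omega\setminus X$. The exceptional set $X$ obstructs global $\dq$-solvability only through the finite-dimensional groups $H^q(\wt\Omega,\OO)$, and the key input (their Proposition~1.3, used also in the present paper) is that for $\nu$ sufficiently large the natural map $H^q(\wt\Omega,\mathcal{I}^\nu\OO)\to H^q(\wt\Omega,\OO)$ is zero. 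Thus a $\dq$-closed form vanishing to high order on $X$ is automatically $\dq$-exact on $\wt\Omega$; one then solves with some residual vanishing on $X$ and pushes down, which is where $N_0$ reappears. In short, the loss $N-N_0$ does not come from a H\"ormander weight but from the (ineffective) exponent $\nu$ in this Grauert-type vanishing together with the Lojasiewicz and R\"uckert exponents from Step~2.
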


In other words, they were able to solve the equation $\dq v=f$ in $\Omega^*$
for a $\dq$-closed $(p,q)$-form $f$ on $\Omega^*$ that vanishes to
sufficiently high order on $A$. Their proof is based on Hironaka's desingularization
and cohomological arguments in the spirit of Grauert \cite{Gr1}.
The constant $N$ appearing in Theorem \ref{thm:FOV1} cannot be controlled,
since it comes into life in a rather abstract manner. It depends on
the exponent in an application
of R\"uckert's Nullstellensatz (\cite{FOV1}, Lemma 2.1)
and on the exponent of some Lojasiewicz inequality (\cite{FOV1}, Lemma 3.2).

Now, the principle idea of the present paper
is that it could be possible to clearify the vanishing assumptions
on $f$ if one takes into account the very special structure
of the embedding of the exceptional set $X=\pi^{-1}(A)$ in $M$.
Including also a careful analysis of the behavior of the norms under the desingularization
$\pi: M\rightarrow X$,
we were able to do that for the $\dq$-equation
if $Y$ is an analytic set with an isolated singularity at the origin
such that the exceptional set $X=\pi^{-1}(0)$ is regular.
In that case we can exploit the fact that $X$ is a compact complex submanifold in $M$
with negative normal bundle. Our strategy is as follows:

Choose a hermitian metric on $M$.
Let $W\subset M\setminus X$ be open. 
We may assume that $\pi: W \rightarrow \pi(W)\subset Y\setminus A$
preserves orientation. Then the determinant of the real Jacobian 
$(\det \Jac_\R \pi)$ is a
well-defined positive function on $W$, and we have
$$\int_{\pi(W)} g\ dV_Y = \int_{W} g\circ\pi\ (\det\Jac_\R \pi)  dV_M$$
for measurable functions $g$ on $\pi(W)$. 
Note that $\pi^* dV_Y = (\det\Jac_\R \pi)\ dV_M$.
Here we make the following crucial observation:
If $W$ is chosen appropriately, then there exists a holomorphic function $J$ on $W$,
such that $(\det\Jac_\R \pi)=|J|^2$. 
This will be shown in Lemma \ref{lem:detjac}.

Now, let $\Omega\subset\subset Y$ be a Stein domain such that $0\in \Omega$,
and $\omega\in L^2_{0,1}(\Omega^*)$.
We consider the pull-back $\pi^* \omega$ on $\Omega'\setminus X$, where $\Omega'=\pi^{-1}(\Omega)$.
Unfortunately, this form is neither $L^2$ nor $\dq$-closed.
But, by the use of Lemma \ref{lem:detjac},
we observe that the locally defined form $J\cdot\pi^* \omega$ is square integrable.
Here a generalization of Riemann's Extension Theorem
comes into play, which is an important tool in the study of the $\dq$-equation on 
analytic varieties: \vspace{3mm}\newline
{\bf Theorem \ref{thm:extension}.}
{\it Let $A$ be an analytic subset of an open set $D\subset\C^n$
of codimension $k>0$.
Furthermore, let $f\in L^{(2k)/(2k-1)}_{(0,q),loc}(D)$
and $g\in L^1_{(0,q+1),loc}(D)$ be differential forms
such that $\dq f = g$ on $D\setminus A.$
Then $\dq f= g$ on the whole set $D$.}

\vspace{3mm}
We conclude that the locally defined product $J\cdot\pi^* \omega$ is $\dq$-closed in the sense of distributions
(if we extend it trivially across $X$). 
This inspires the following construction:
Let $X$ be an analytic subset in $M$ and $\mathcal{I}$ the ideal sheaf of $X$ in $M$.
Then $\mathcal{I}^k \OO_M$ is a subsheaf of the sheaf of germs of meromorphic
functions on $M$ for each $k\in\Z$.

In order to give a fine resolution for the $\mathcal{I}^k\OO_M$,
we define 
the presheaves
$$\mathcal{L}_{0,q}(U):=\{f\in L^{2,loc}_{0,q}(U): \dq f\in L^{2,loc}_{0,q+1}(U)\},\ \ U\subset M \mbox{ open},$$
which are already sheaves with the natural restriction maps.
For $k\in \Z$, we consider $\mathcal{I}^k \mathcal{L}_{0,q}$
as subsheaves of the sheaf of germs of differential forms with measurable coefficients.
Now, we define a weighted $\dq$-operator on $\mathcal{I}^k \mathcal{L}_{0,q}$.
Let $f \in (\mathcal{I}^k \mathcal{L}_{0,q})_z$. Then $f$ can be written locally as $f = h^k f_0$,
where $h\in (\OO_M)_z$ generates $\mathcal{I}_z$ and $f_0\in (\mathcal{L}_{0,q})_z$. Let
$$\dq_k f := h^k \dq f_0 = h^k \dq ( h^{-k} f).$$
For $k\geq 0$, this operator is just the usual $\dq$-operator.
We obtain the fine resolution
$$0 \rightarrow \mathcal{I}^k\OO_M \rightarrow
\mathcal{I}^k \mathcal{L}_{0,0} \xrightarrow{\ \dq_k\ }
\mathcal{I}^k \mathcal{L}_{0,1} \xrightarrow{\ \dq_k\ }
\cdots \xrightarrow{\ \dq_k\ }
\mathcal{I}^k \mathcal{L}_{0,d} \rightarrow 0 .$$
By the abstract Theorem of de Rham, this implies
$$H^q(U,\mathcal{I}^k\OO_M) \cong \frac{\mbox{ker }
(\dq_k: \mathcal{I}^k\mathcal{L}_{0,q}(U) \rightarrow \mathcal{I}^k\mathcal{L}_{0,q+1}(U))}
{\mbox{Im }
(\dq_k: \mathcal{I}^k\mathcal{L}_{0,q-1}(U) \rightarrow \mathcal{I}^k\mathcal{L}_{0,q}(U))}$$
for open sets $U\subset M$. Let us return to $\pi^*\omega$.
If $X$ is irreducible, then all the locally defined holomorphic functions $J$
vanish of a fixed order $k_0>0$ along $X$. Hence, 
$$\pi^*\omega\in \mathcal{I}^{-k_0}\mathcal{L}_{0,1}(\Omega')\ \ \mbox{ and }\ \ \dq_{-k_0} \pi^*\omega=0.$$
It is now interesting to take a closer look at $H^1(V,\mathcal{I}^{-k_0}\OO_M)$
for a well-chosen neighborhood $V$ of $X$ in $M$.
Here, let us assume that $X$ is regular. Then, by the work of Grauert (see \cite{Gr1}),
we know that the normal bundle $N$ of $X$ in $M$ is negative.
This implies that there is a neighborhood of $X$ in $M$ which is biholomorphic to a neighborhood of the
zero section in the normal bundle (Theorem \ref{thm:embedding}).
So, there is a
neighborhood $V$ of $X$ in $\Omega'\subset\subset M$ such that
$$H^q(V,\mathcal{I}^{-k_0}\OO_M) \cong H^q(N,\mathcal{I}^{-k_0}\OO_{N}),$$
where, on the right hand side, $\mathcal{I}$ denotes the ideal sheaf of the zero section in $N$.
Using Grauert's vanishing results for the cohomology of coherent analytic sheaves
with values in positive line bundles and an expansion for cohomology classes on holomorphic line bundles (Theorem \ref{thm:cohom}), 
we were able to show that
\begin{eqnarray}\label{eq:obstructions}
H^q(N,\mathcal{I}^{-k_0}\OO_{N}) \cong \bigoplus_{\mu\geq -k_0} H^q(X,\OO(N^{-\mu})),
\end{eqnarray}
where $\OO(N^{-\mu})$ is the sheaf of germs of holomorphic sections in $N^{-\mu}$.
This approach allows to compute obstructions to the solvability of the $\dq$-equation.
We will do that in more detail for homogeneous varieties with an isolated singularity.
For the moment, assume that the right hand side of \eqref{eq:obstructions}
vanishes. Then, we deduce that there is a solution $\eta'\in \mathcal{I}^{-k_0}\mathcal{L}_{0,0}(V)$
such that $\dq_{-k_0}\eta' = \pi^*\omega$ on $V$.
But then $\eta:=(\pi^{-1})^* \eta' \in L^2(U\setminus\{0\})$ for $U\subset\subset \pi(V)$
and $\dq \eta=\omega$ on $\pi(V)\setminus\{0\}$.

This solution can be extended to the whole
set $\Omega^*$ by the use of H\"ormander's $L^2$-theory (\cite{Hoe}).
We obtain the following result:

\begin{thm}\label{thm:l2}
Let $Y$ be a pure dimensional analytic variety in $\C^n$
with an isolated singularity at the origin
such that the exceptional set $X$ of a desingularization of $Y$ is regular, and denote by $N$ the normal bundle
of $X$.
Then there exists a natural number $k_0=k_0(Y)>0$ such that the following is true:
Assume that $H^1(X,\OO(N^\mu))=0$ for all $\mu\leq k_0$ and that
$\Omega\subset\subset Y$ is a Stein domain with $0\in\Omega$, $\Omega^*=\Omega\setminus\{0\}$.
Then there exists a constant $C_\Omega >0$ such that:
If $\omega\in L^2_{0,1}(\Omega^*)$ with $\dq\omega=0$,
then there exists a solution $\eta\in L^2(\Omega^*)$ such that $\dq \eta=\omega$
and
$$\|\eta\|_{L^2(\Omega*)} \leq C_\Omega\ \|\omega\|_{L^2_{0,1}(\Omega^*)}.$$
\end{thm}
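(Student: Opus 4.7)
The plan is to transfer the problem from the singular variety $Y$ to the desingularization $M$ via $\pi$, solve a twisted $\dq$-equation near the compact exceptional set $X$ using the sheaves $\mathcal{I}^k\mathcal{L}_{0,q}$, and then patch with H\"ormander's $L^2$-theory on $\Omega$. Set $\Omega'=\pi^{-1}(\Omega)$. Lemma \ref{lem:detjac} provides, in a neighborhood of $X$, a holomorphic function $J$ with $(\det\Jac_\R\pi)=|J|^2$; since $X$ is the connected exceptional set of an isolated singularity and is smooth, it is irreducible, and the local versions of $J$ all vanish to a common order $k_0=k_0(Y)>0$ along $X$. Change of variables together with the hypothesis $\omega\in L^2_{0,1}(\Omega^*)$ forces $J\cdot\pi^*\omega$ to be locally square integrable on $\Omega'$, and Theorem \ref{thm:extension} then shows it is $\dq$-closed on all of $\Omega'$. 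Equivalently, $\pi^*\omega\in\mathcal{I}^{-k_0}\mathcal{L}_{0,1}(\Omega')$ with $\dq_{-k_0}\pi^*\omega=0$.

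Next, Theorem \ref{thm:embedding} furnishes a neighborhood $V\subset\Omega'$ of $X$ biholomorphic to a neighborhood $N'$ of the zero section of the normal bundle $N$. The fine resolution of $\mathcal{I}^{-k_0}\OO_M$ combined with the abstract de Rham theorem and \eqref{eq:obstructions} yields
\[
H^1\!\big(V,\mathcal{I}^{-k_0}\OO_M\big)\cong H^1\!\big(N',\mathcal{I}^{-k_0}\OO_N\big)\cong \bigoplus_{\nu\leq k_0}H^1\!\big(X,\OO(N^\nu)\big),
\]
which vanishes by hypothesis. Hence there exists $\eta'\in\mathcal{I}^{-k_0}\mathcal{L}_{0,0}(V)$ with $\dq_{-k_0}\eta'=\pi^*\omega$. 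Writing $\eta'=h^{-k_0}\wt{\eta}'$ locally, with $h$ a generator of $\mathcal{I}$ and $\wt{\eta}'\in L^{2,loc}$, the fact that $J$ and $h^{k_0}$ have the same vanishing order along $X$ shows that the push-down $\eta_0:=(\pi^{-1})^*\eta'$ lies in $L^2(U\setminus\{0\})$, where $U:=\pi(V)$, and satisfies $\dq\eta_0=\omega$ on $U\setminus\{0\}$.

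To extend the local solution to all of $\Omega^*$, choose a cut-off $\chi\in C^\infty_c(U)$ with $\chi\equiv 1$ near $0$. The form $\alpha:=\omega-\dq(\chi\eta_0)$ is $\dq$-closed, lies in $L^2_{0,1}(\Omega^*)$, and vanishes near $0$; its zero-extension across the isolated singular point (permitted by Theorem \ref{thm:extension}) is a $\dq$-closed $L^2_{0,1}$-form on the Stein domain $\Omega$. H\"ormander's $L^2$-theory then produces $\eta_1\in L^2(\Omega)$ with $\dq\eta_1=\alpha$ and $\|\eta_1\|_{L^2(\Omega)}\leq C\|\alpha\|_{L^2_{0,1}(\Omega)}\leq C'\|\omega\|_{L^2_{0,1}(\Omega^*)}$. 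The desired solution is $\eta:=\chi\eta_0+\eta_1$; the constant $C_\Omega$ in the final estimate can be obtained either by chasing constants through the argument, or more abstractly via the Banach open mapping theorem applied to the $\dq$-solution operator regarded as a bounded map on the $L^2$-cohomology.

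The principal difficulty is the twofold role played by the exponent $k_0$: it is simultaneously the vanishing order of $J$ (which is what forces $\pi^*\omega\in\mathcal{I}^{-k_0}\mathcal{L}_{0,1}$ and what underlies the range $\nu\leq k_0$ in the vanishing hypothesis) and the threshold that makes any local $\dq_{-k_0}$-primitive push down to an $L^2$-form on $Y^*$. Aligning these two requirements---together with globalizing the biholomorphic equivalence between a neighborhood of $X$ in $M$ and a neighborhood of the zero section of $N$ via Grauert's theorem on exceptional sets---is precisely where the regularity of the exceptional set is essential.
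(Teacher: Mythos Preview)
Your argument follows the paper's strategy closely through the key steps: identifying $k_0$ as the vanishing order of the local $J$, recognizing $\pi^*\omega\in\mathcal{I}^{-k_0}\mathcal{L}_{0,1}(\Omega')$ with $\dq_{-k_0}\pi^*\omega=0$, and invoking the cohomology isomorphism to produce a local $\dq_{-k_0}$-primitive $\eta'$ near $X$. (A small technical point: to invoke \eqref{eq:obstructions} via Theorem~\ref{thm:cohom} you need the neighborhood $V$ to be strongly pseudoconvex with smooth boundary; the paper obtains this by taking a sublevel set of $\rho=\|\cdot\|^2\circ\pi$.)

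The genuine gap is in your extension step. You write that the zero-extension of $\alpha=\omega-\dq(\chi\eta_0)$ is ``a $\dq$-closed $L^2_{0,1}$-form on the Stein domain $\Omega$'' and then appeal to H\"ormander's $L^2$-theory on $\Omega$. But $\Omega$ is a \emph{singular} Stein space, not a Stein manifold, and classical H\"ormander $L^2$-existence does not apply there; establishing $L^2$-solvability on such spaces is precisely the goal of the theorem. Nor can you work on $\Omega^*$, since removing the isolated point destroys pseudoconvexity (already $\C^d\setminus\{0\}$ is not Stein for $d\ge 2$).

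The paper avoids this by performing the patching on the smooth manifold $\Omega'=\pi^{-1}(\Omega)$ rather than on $\Omega$. One takes a cut-off $\chi$ on $M$ and forms $\wt{\omega}:=\pi^*\omega-\dq(\chi\eta')$, which is $\dq$-closed, $L^2$ on $\Omega'$, and vanishes on a neighborhood of $X$. Since $\Omega'$ is $1$-convex, one can choose (following \cite{CoMi}) a smooth exhaustion $\Phi$ of $\Omega'$ that is strictly plurisubharmonic outside a compact set on which $\wt{\omega}$ vanishes; then H\"ormander's Theorem 3.4.6 (as used in \cite{FOV1}, Proposition~5.1) yields $\wt{\eta}\in L^2(\Omega')$ with $\dq\wt{\eta}=\wt{\omega}$. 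Pushing $\chi\eta'+\wt{\eta}$ back down via $(\pi|_{M\setminus X}^{-1})^*$ and using Lemma~\ref{lem:detjac} once more gives the global $L^2$ solution on $\Omega^*$. Your final remark about obtaining the constant $C_\Omega$ via the open mapping theorem is correct and matches the paper's appeal to \cite{FOV1}, Lemma~4.2.
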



Here, one should mention the following Cohomology Extension Theorem:

\begin{thm}{(\bf Scheja \cite{Sch1,Sch2})}\label{thm:scheja}
Let $Y$ be a closed pure dimensional analytic subset in $\C^n$ which is locally a complete
intersection, and $A$ a closed pure dimensional analytic subset of $Y$.
Then, the natural restriction mapping
$$H^q(Y,\OO_Y) \rightarrow H^q(Y\setminus A,\OO_{Y\setminus A})$$
is bijective for all $0\leq q\leq \dim Y-\dim A-2$.
\end{thm}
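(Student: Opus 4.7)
The plan is to exploit the localization long exact sequence of cohomology with supports in $A$,
\begin{equation*}
\cdots \to H^q_A(Y,\OO_Y) \to H^q(Y,\OO_Y) \to H^q(Y\setminus A,\OO_Y) \to H^{q+1}_A(Y,\OO_Y) \to \cdots
\end{equation*}
From this sequence the restriction map in degree $q$ is bijective as soon as both $H^q_A(Y,\OO_Y)$ and $H^{q+1}_A(Y,\OO_Y)$ vanish, so the theorem reduces to proving the vanishing statement
\begin{equation*}
H^j_A(Y,\OO_Y) = 0 \quad \text{for all } j \leq \dim Y - \dim A - 1.
\end{equation*}

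To obtain this I would feed in the local complete intersection hypothesis. At every point $y \in Y$ the stalk $\OO_{Y,y}$ is Cohen--Macaulay of dimension $\dim_y Y$, and since both $Y$ and $A$ are pure-dimensional, the depth of $\OO_{Y,y}$ along the ideal defining $A$ coincides with the local codimension $\dim Y - \dim A$. Scheja's identification of depth with the vanishing threshold of local cohomology then yields the stalkwise vanishing
\begin{equation*}
\mathcal{H}^j_A(\OO_Y) = 0 \quad \text{for } j \leq \dim Y - \dim A - 1.
\end{equation*}
Global vanishing of $H^j_A(Y,\OO_Y)$ in the same range follows formally from the local-to-global spectral sequence
\begin{equation*}
E_2^{p,q} = H^p\bigl(Y,\mathcal{H}^q_A(\OO_Y)\bigr) \Longrightarrow H^{p+q}_A(Y,\OO_Y),
\end{equation*}
whose $E_2$-page vanishes for $p+q \leq \dim Y - \dim A - 1$.

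The hard part is the passage from algebraic depth to the vanishing of the local cohomology sheaves in the complex-analytic category. This is Scheja's original theorem on profondeur, serving as a complex-analytic substitute for Grothendieck's local duality, and it is exactly where the local complete intersection assumption is essential: for an arbitrary coherent sheaf on $Y$ the depth can be strictly smaller than the codimension, which would shrink the bijectivity range. Once this depth--codepth bound is in hand, the remaining ingredients (the localization sequence and the local-to-global spectral sequence for sheaves with support) are standard homological machinery and impose no further restriction on the range of $q$.
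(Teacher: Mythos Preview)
The paper does not prove this theorem; it is stated as a result of Scheja with references \cite{Sch1,Sch2} and is used only as background (to contrast with the paper's own methods after Theorem~\ref{thm:l2}). So there is no ``paper's own proof'' to compare your proposal against.

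That said, your outline is an accurate reconstruction of the standard argument behind Scheja's theorem. The reduction via the long exact sequence of cohomology with supports in $A$ to the vanishing of $H^j_A(Y,\OO_Y)$ for $j\leq \dim Y-\dim A-1$ is exactly right, and the identification of this vanishing range with the depth of $\OO_Y$ along the ideal of $A$ is precisely Scheja's \emph{profondeur} criterion from \cite{Sch1}. The local complete intersection hypothesis enters exactly where you say: it guarantees that $\OO_{Y,y}$ is Cohen--Macaulay, so depth equals codimension at every point. One small remark: the local-to-global spectral sequence is not strictly necessary here, since the vanishing of the local cohomology sheaves $\mathcal{H}^j_A(\OO_Y)$ in the stated range already forces $H^j_A(Y,\OO_Y)=0$ by a direct excision/inductive argument (this is how Scheja organizes it), but invoking the spectral sequence is a perfectly valid and perhaps cleaner way to package the same conclusion.
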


If $Y$ is a complete intersection with an isolated singularity at the origin,
and $\Omega\subset\subset Y$ Stein with $0\in\Omega$, then it can be deduced that there is a bounded
$L^2$-solution operator for the $\dq$ equation on $\Omega\setminus\{0\}$ for $(p,q)$-forms
if $q>0$ and $1\leq p + q \leq \dim Y-2$. 
This was shown by Forn{\ae}ss, {\O}vrelid and Vassiliadou in \cite{FOV2}.
One must emphasize that Theorem \ref{thm:l2} is based on completely different techniques.

Now, let $\Omega\subset\subset Y$ be strongly pseudoconvex and $\omega\in L^\infty_{0,1}(\Omega^*)$.
This is a much more comfortable situation. By use of a regularization procedure
(we solve the $\dq$-equation two times locally with certain estimates),
we can assume that 
$$\pi^* \omega \in \mathcal{I}^1\mathcal{L}_{0,1}(\Omega')\ \ \mbox{ and }\ \ \dq_1\pi^*\omega=\dq\pi^*\omega=0.$$
As before, there is the neighborhood $V$ of $X$ such that
$$H^q(V,\mathcal{I}^1\OO_M) \cong H^q(N,\mathcal{I}^1 \OO_{N}).$$
So, in this case, it is enough to assume that 
$$H^1(X,\OO(N^{-\mu}))=0$$
for all $\mu \geq 1$.
In that case, we conclude that there is a continuous
solution $\eta'\in \mathcal{I}^1 \mathcal{L}_{0,0}(V)$, $\dq_1 \eta'=\dq \eta'=\pi^*\omega$,
which is identically zero on $X$.
This implies that we have $\eta:=(\pi^{-1})^*\eta' \in C^0(\pi(V))$ such that $\dq \eta=\omega$
on $\pi(V)\setminus\{0\}$. If we restrict $\eta$ to a strongly pseudoconvex neighborhood $U\subset\subset \pi(V)$
of the origin, then
$\eta$ can be extended to the whole set $\o{\Omega}$ by the use of Grauert's bump method.
This leads to the following Theorem:

\begin{thm}\label{thm:c0}
Let $Y$ be a pure dimensional analytic set in $\C^n$
with an isolated singularity at the origin
such that the exceptional set $X$ of a desingularization of $Y$ is regular and $H^1(X,\OO(N^{-\mu}))=0$ for all
$\mu\geq 1$, where $N$ is the normal bundle of $X$.
If $\Omega\subset\subset Y$ is a strongly pseudoconvex subset with $0\in\Omega$ and $\Omega^*=\Omega\setminus\{0\}$,
then there exists a constant $C_\Omega >0$ such that the following is true:
If $\omega\in L^\infty_{0,1}(\Omega^*)$ with $\dq\omega=0$,
then there exists 
$\eta\in C^0(\o{\Omega})$ such that $\dq \eta=\omega$ on $\Omega^*$
and
$$\|\eta\|_{C^0(\o{\Omega})} \leq C_\Omega\ \|\omega\|_{L^\infty_{0,1}(\Omega^*)}.$$
\end{thm}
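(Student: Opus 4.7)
The plan is to follow the strategy sketched for $L^\infty$-data in the introduction: regularize $\pi^*\omega$ near the exceptional set so that it lies in the sheaf $\mathcal{I}^1\mathcal{L}_{0,1}$ and is $\dq_1$-closed, apply the cohomology vanishing to produce a continuous primitive vanishing along $X$, push everything down to a continuous solution near the origin in $Y$, and finally propagate that local solution over $\o{\Omega}$ by Grauert's bump method. At each step I track the sup-norm estimate in terms of $\|\omega\|_{L^\infty_{0,1}(\Omega^*)}$.

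First I would set $\Omega':=\pi^{-1}(\Omega)$ and $\tilde\omega:=\pi^*\omega$ on $\Omega'\setminus X$. Because $\pi$ is biholomorphic off $X$, $\tilde\omega$ is locally bounded on $\Omega'\setminus X$, hence locally $L^2$, and Theorem \ref{thm:extension} with $k=1$ yields $\dq\tilde\omega=0$ distributionally on all of $\Omega'$. The delicate step is to modify $\omega$ so that in addition $\tilde\omega\in\mathcal{I}^1\mathcal{L}_{0,1}(\Omega')$, i.e.\ its pullback vanishes to first order along $X$. To achieve this I would cover a neighborhood of $0\in Y$ by finitely many Stein charts on which the standard local $\dq$-integration for bounded data (Henkin--Ramirez type operators) gives continuous primitives $u_i$ with $\dq u_i=\omega$ and $\|u_i\|_{C^0}\lesssim\|\omega\|_{L^\infty}$. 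After pulling back and patching with a partition of unity one obtains a decomposition $\tilde\omega=\dq W+\tilde\omega_1$ with $W\in C^0$ and $\tilde\omega_1\in L^\infty$, the new form $\tilde\omega_1$ carrying an explicit factor of a local defining function $h$ of $X$ after multiplication by cutoffs adapted to the desingularization; a second local $\dq$-integration against the partition-of-unity error is needed to ensure $\dq_1\tilde\omega_1=0$ (not merely $\dq\tilde\omega_1=0$). This is the ``two local $\dq$-solutions'' procedure referred to in the introduction, and all relevant norms stay controlled by $\|\omega\|_{L^\infty}$.

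Once the regularization is in place, regularity of $X$ and Theorem \ref{thm:embedding} yield a neighborhood $V$ of $X$ in $\Omega'$ biholomorphic to a neighborhood of the zero section of the normal bundle $N$, and the analogue of (\ref{eq:obstructions}) for exponent $1$ gives
\[
H^1(V,\mathcal{I}^1\OO_M)\;\cong\;H^1(N,\mathcal{I}^1\OO_N)\;\cong\;\bigoplus_{\mu\ge 1}H^1(X,\OO(N^{-\mu}))\;=\;0
\]
by hypothesis. The fine resolution of $\mathcal{I}^1\OO_M$ by the $\mathcal{I}^1\mathcal{L}_{0,\bullet}$ then produces $\eta'\in\mathcal{I}^1\mathcal{L}_{0,0}(V)$ with $\dq\eta'=\dq_1\eta'=\tilde\omega_1$. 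Writing $\eta'=h\varphi$ with $\varphi\in L^2_{loc}$ and using interior ellipticity of $\dq$ with bounded right-hand side, $\eta'$ is H\"older continuous on a slightly shrunken neighborhood $V'$ of $X$; since $\eta'\in h\cdot L^2_{loc}$ a pointwise value $\eta'(x)\neq 0$ at a smooth point $x\in X$ would force $\varphi$ to blow up like $1/|h|$ and hence fail to be $L^2$, so $\eta'$ vanishes pointwise on $X$, with $\|\eta'\|_{C^0(V')}\lesssim\|\tilde\omega_1\|_{L^\infty(V)}$. Setting $\eta_0:=(\pi^{-1})^*(\eta'+W)$ on $\pi(V')\setminus\{0\}$, the vanishing $\eta'|_X=0$ extends $\eta_0$ continuously by zero at the singularity, producing $\eta_0\in C^0(\o{U})$ with $\dq\eta_0=\omega$ on $U^*$ on any strongly pseudoconvex neighborhood $U\subset\subset\pi(V')$ of $0$, satisfying $\|\eta_0\|_{C^0(\o{U})}\lesssim\|\omega\|_{L^\infty}$.

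The extension from $U$ to $\o{\Omega}$ is a purely smooth-manifold problem, since $\o{\Omega}\setminus U\subset Y^*$ and $\Omega$ is strongly pseudoconvex. Finitely many applications of Grauert's bump method, using Henkin--Ramirez kernels with $C^0$-estimates on strongly pseudoconvex domains in complex manifolds, modify $\eta_0$ outside $U$ by addition of a continuous $\dq$-primitive of $\omega$ and produce the desired $\eta\in C^0(\o{\Omega})$ with $\dq\eta=\omega$ on $\Omega^*$ and $\|\eta\|_{C^0(\o{\Omega})}\le C_\Omega\|\omega\|_{L^\infty_{0,1}(\Omega^*)}$. The hardest step is the regularization: it is what forces the double local $C^0$-solution, because one must simultaneously produce a first-order factor $h$ at the exceptional set and preserve $\dq_1$-closedness rather than mere $\dq$-closedness; once this is done, the cohomology vanishing step is essentially formal, and the bump extension is classical.
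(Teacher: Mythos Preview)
Your overall architecture matches the paper's proof in Section 7: regularize $\pi^*\omega$ into $\mathcal{I}^1\mathcal{L}_{0,1}$, kill it with the vanishing of $H^1(V,\mathcal{I}^1\OO_M)$, push down, and extend by Grauert bumps. But the regularization step, which you rightly flag as the hard one, has a real gap. You propose to solve $\dq u_i=\omega$ on Stein charts covering a neighborhood of $0\in Y$ and then pull back; Henkin--Ramirez operators are not available on the singular space $Y$ near $0$, so the local solving must take place upstairs on $M$, in coordinate balls covering $X$. More importantly, you never explain \emph{why} the patched error $\tilde\omega_1$ acquires a factor of the defining function $h$. The paper's mechanism is a concrete geometric observation you are missing: in adapted coordinates $(w_1,\dots,w_d)$ with $X=\{w_1=0\}$, the components $g_k$ of $\Pi^*\omega$ for $k\ge 2$ already vanish on $X$, because $\Pi$ collapses $X$ to a point. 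Hence any local primitive $\eta$ of $\Pi^*\omega$ is holomorphic along $X$, and subtracting $\eta(0,w_2,\dots,w_d)$ produces a primitive vanishing (to order $1/2$) on $X$. Only after this subtraction does the partition-of-unity error $\omega_0=\sum\dq\chi_l\wedge(\Psi_l^{-1})^*\tilde\eta_l$ vanish along $X$, which is what feeds the second step where one divides by $w_1$ and solves again to land in $\mathcal{I}^1\mathcal{L}_{0,1}$.

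There is a second, related gap. You push down $\eta_0=(\pi^{-1})^*(\eta'+W)$ and claim it extends continuously to $0$ because $\eta'|_X=0$. But $\pi$ collapses all of $X$ to the origin, so continuous extension requires $\eta'+W$ to be \emph{constant} on $X$; you never arrange $W|_X$ to be constant. In the paper this is exactly why the subtraction trick above is needed: it forces the first piece $u_0$ to vanish on $X$, and then one checks $u_2-u_1\in\mathcal{I}^1\mathcal{L}_{0,0}\cap C^0$ also vanishes on $X$, so the full $u=u_0-u_1+u_2$ vanishes on $X$ and $(\pi^{-1})^*u$ extends. Finally, a minor point: the bound $\|\eta'\|_{C^0}\lesssim\|\tilde\omega_1\|_{L^\infty}$ does not follow from the bare cohomology vanishing; the paper obtains the uniform constant $C_\Omega$ a posteriori via the closed-graph type argument of \cite{FOV1}, Lemma 4.2, rather than by tracking norms through the cohomological step.
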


From the proof, it will be clear that the solution $\eta$ has much better properties
than just being continuous. We will work that out in the case where $Y$ is a cone
with an isolated singularity at the origin. In this situation,
a desingularization of $Y$ is given by a single blow up, and the exceptional set $X$ is regular.
Then, we can use a $\dq$-integration formula for weighted homogeneous varieties (see \cite{RuZe})
in order to estimate the solution at the origin, where it proves to be H\"older $\alpha$-continuous
for each $0<\alpha<1$. Taking into account the H\"older $1/2$-continuity at the strongly pseudoconvex
boundary of the domain $\Omega$, it follows:

\begin{thm}\label{thm:hoelder}
Let $Y$ be an irreducible cone in $\C^n$ with an isolated singularity at the origin
such that $H^1(X,\OO(N^{-\mu}))=0$ for all $\mu\geq 1$, where $X$ is the exceptional set
of the blow up at the origin and $N$ its normal bundle.
Moreover, let
$\Omega\subset\subset Y$ be strongly pseudoconvex with $0\in\Omega$ and $\Omega^*=\Omega\setminus\{0\}$.
Then there exists a constant $C_\Omega' >0$ such that the following is true:
If $\omega\in L^\infty_{0,1}(\Omega^*)$ with $\dq\omega=0$,
then there exists a solution $\eta\in C^{1/2}(\o{\Omega})$ with $\dq \eta=\omega$ on $\Omega^*$
and
$$\|\eta\|_{C^{1/2}(\o{\Omega})} \leq C_\Omega'\ \|\omega\|_{L^\infty_{0,1}(\Omega^*)}.$$
\end{thm}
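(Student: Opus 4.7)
For an irreducible cone $Y$ the blow-up $\pi : M \to Y$ of the origin is itself a desingularization: its exceptional set $X$ is the (regular) projectivization of $Y$ and has negative normal bundle $N$. The hypotheses of Theorem~\ref{thm:c0} are therefore satisfied, and that theorem already supplies a continuous solution $\eta \in C^0(\overline{\Omega})$ of $\dq \eta = \omega$ on $\Omega^*$ with $\|\eta\|_{C^0(\overline{\Omega})} \leq C \|\omega\|_{\infty}$. The task is to upgrade this $\eta$ from $C^0$ to $C^{1/2}$ on $\overline{\Omega}$. I would do this by producing sharper local Hölder representatives near $\partial\Omega$ and near $0$, and matching them against $\eta$ via bounded holomorphic corrections.

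Near $\partial\Omega$, the boundary is strongly pseudoconvex and lies in the complex manifold $Y_{\mathrm{reg}}$. The proof of Theorem~\ref{thm:c0} extends the local solution across $\partial\Omega$ by Grauert's bump method, which in the strongly pseudoconvex setting is well known to give a $C^{1/2}$ boundary estimate (each bump step can be realised by a Henkin--Ramírez-type integral operator, whose $C^{1/2}$ boundary regularity is classical). Thus $\eta$ already satisfies a $C^{1/2}$ bound on a fixed one-sided neighbourhood of $\partial\Omega$, and interior ellipticity of $\dq$ applied to $L^\infty$ data gives $C^{\alpha}$ estimates for every $\alpha<1$ on compact subsets of $\Omega^* \setminus\{0\}$.

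At the origin I would invoke the explicit $\dq$-integration formula for weighted homogeneous varieties of \cite{RuZe}, which is available precisely because $Y$ is a cone. On a small conical neighbourhood $U_0$ of $0$ this formula yields a local solution $\eta_0$ of $\dq \eta_0 = \omega$ whose kernel is exactly homogeneous under the $\mathbb{C}^*$-action on $Y$. A standard scaling/dyadic-decomposition argument in the cone then converts the homogeneous kernel bound into
$$|\eta_0(z) - \eta_0(w)| \leq C_\alpha\, \|\omega\|_\infty\, |z - w|^\alpha \qquad (0 < \alpha < 1),$$
on $U_0$, with $\eta_0$ extending continuously to $0$ by the value $0$; in particular $\eta_0 \in C^{1/2}(\overline{U_0})$. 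The difference $\eta - \eta_0$ is $\dq$-closed and bounded on $U_0^*$; pulling back through $\pi$ and using Riemann extension on the manifold $M$ produces a holomorphic representative on $U_0$ which is smooth away from $0$, so the Hölder bound for $\eta_0$ transfers to $\eta$ up to a bounded (and smooth) correction.

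Combining the $C^{1/2}$ estimate near $\partial\Omega$, the $C^\alpha$ estimate near $0$ (which in particular gives $C^{1/2}$ there), and the interior $C^\alpha$ estimate in between through a smooth partition of unity, and keeping track of all constants in $\|\omega\|_\infty$, yields $\eta \in C^{1/2}(\overline{\Omega})$ with the claimed norm bound. The main obstacle is the Hölder estimate of $\eta_0$ at $0$: one must verify that the \cite{RuZe} kernel, despite its singularities on the diagonal and along the cone directions, really gains a Hölder exponent $\alpha$ for every $\alpha<1$ with constants uniform in $\alpha$ up to $1/2$. This is the sharp homogeneity estimate on the cone and is where the weighted-homogeneous structure of $Y$ enters in an essential way.
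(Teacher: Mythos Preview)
Your approach uses the same three ingredients as the paper---Theorem~\ref{thm:c0} for a $C^0$ solution, the integration formula from \cite{RuZe} for H\"older regularity at the origin, and strong pseudoconvexity for $C^{1/2}$ at $b\Omega$---but the gluing is organised differently. The paper avoids your holomorphic-matching step entirely: starting from the $C^0$ solution $v$ of Theorem~\ref{thm:c0}, it takes a cut-off $\chi$ supported in $\Omega$ and identically $1$ near $0$, applies \cite{RuZe} to the \emph{compactly supported} form $\omega':=\dq(\chi v)$ to obtain $g\in C^\theta(\o{\Omega})$ for every $\theta<1$, and sets $\eta=g+(1-\chi)v$. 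The second summand vanishes near $0$ and is $C^{1/2}$ up to $b\Omega$ by the usual Henkin-type estimates, so one is done without ever comparing two local solutions. This also makes your proposed scaling/dyadic argument unnecessary: Theorem~3 of \cite{RuZe} already delivers the $C^\theta$ estimate directly for compactly supported data.

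Your version is not wrong, but there is a small gap in the matching step. You argue that $\eta-\eta_0$ is holomorphic and bounded on $U_0^*$, pull it back to $M$, extend across $X$, and then say the correction is ``bounded (and smooth)''. Smoothness away from $0$ is clear, but at $0$ you only know continuity a priori, and you need $C^{1/2}$ there. The fix is short: the pulled-back function $h:=\pi^*(\eta-\eta_0)$ is holomorphic on a neighbourhood of the compact connected manifold $X$, hence constant ($\equiv c$) on $X$; then $h-c\in\mathcal{I}^1\OO_M$ locally, so $|h(p)-c|\lesssim \mathrm{dist}_M(p,X)\sim |\pi(p)|$, which gives that $\eta-\eta_0$ is in fact Lipschitz at $0$ with constant $\lesssim\|\omega\|_\infty$. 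Once this is inserted, your argument closes; the paper's cut-off trick simply sidesteps the issue.
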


Here, the H\"older-norm is defined as follows:
Let $z,w\in Y$.
Then we define $\dist_Y(z,w)$
as the infimum of the length of all curves connecting $z$ and $w$ in $Y$
that are piecewise smooth. It is clear that such curves exist
in this situation.
The length is measured as the length of the curve in 
the ambient space $\C^n$, which is the same as the length in $Y^*$
since $Y^*$ carries the induced norm. For $0<\alpha<1$, $U\subset Y$ and $f$ a measurable function on $U$, 
let
$$\|f\|_{C^\alpha(U)} := \|f\|_{L^\infty(U)} + 
\sup_{\substack{z,w\in U\\ z\neq w}} \frac{|f(z)-f(w)|}{\dist_Y(z,w)^\alpha}.$$

The paper is organized as follows:
In the next four chapters, we will present the main tools of our work.
They are developed partially in more general versions than needed in this paper.
Section 6 is dedicated to the proof of Theorem \ref{thm:l2}, while we will
prove Theorem \ref{thm:c0} in section 7.
In section 8, we restrict our attention to irreducible homogeneous varieties with an isolated 
singularity. We will prove Theorem \ref{thm:hoelder} and determine some necessary conditions
for $L^\infty$-solvability of the $\dq$-equation in that setting. Then,
we compute some obstructions explicitly for certain examples in section 9,
and conclude this paper by a short remark on the cohomology of a desingularization.

\newpage
\section{Behavior of the $L^2$-Norm under Desingularization}\label{sec:detjac}

Let $Y$ be a closed analytic set of pure dimension $d\geq 2$ in $\C^n$ and $\pi: M\rightarrow Y$
a desingularization as in the introduction.
Furthermore, let $A = Y_{sing}$ be the singular set in $Y$ and $X=\pi^{-1}(A)$.
Then $X$ has only normal crossings. For $a\in\C^d$ and $r>0$, let
$$P_r(a)=\{z\in\C^d: |a_j-z_j| < r \mbox{ for } j=1, ..., d\}.$$
We call $\Psi: P_r(0) \rightarrow M$
a {\bf nice holomorphic polydisc} (with respect to a desingularization $\pi: M \rightarrow Y$),
if $\Psi$ is biholomorphic and
$$\Psi^{-1}(X) = \{z\in P_r(0): z_1\cdots z_k=0\}$$
for some $1\leq k\leq d$. In this section, we will study $\det \Jac_\R \pi$
on compact subsets of $\Psi(P_r(0))$. Since $\Psi$ is a biholomorphism, we may as well
investigate 
$$\Pi:=\pi\circ\Psi:\ P_r(0)\subset\C^d \rightarrow Y.$$

Let $\wt{A}=\overline{\Pi^{-1}(A)}$ and $\Pi':=\Pi|_{P_r(0)\setminus \wt{A}}: P_r(0)\setminus\wt{A}\rightarrow Y$,
which is a biholomorphism onto its image.
We need an extension of $\Pi'$ to a biholomorphic map on a space with vanishing
first singular cohomology group.
For this, let's construct an artificial codomain:
Let $0< r' <r$ and 
\begin{eqnarray*}
Y_1 &:=& \C^d\setminus \overline{\big( P_{r'}(0) \cup \wt{A}\big)}\ \subset \C^d,\\
Y_2 &:=& \Pi'(P_r(0) \setminus \wt{A}) \ \subset Y.
\end{eqnarray*}
We define the complex manifold
$$\wt{Y} := Y_1 \dot\cup Y_2 / \sim\ \ ,$$
where for $y_1\in Y_1$, $y_2\in Y_2$ we say $y_1\sim y_2$ if $y_2=\Pi'(y_1)$.
Since $\Pi'$ is a biholomorphism onto $Y_2$, we obtain a complex manifold
$\wt{Y}$, which is covered by the two open sets $Y_1$ and $Y_2$. Let $(\chi_1, \chi_2)$
be a partition of unity subordinate to that open cover, $\chi_2=1-\chi_1$.
Let $h_1$ be the standard hermitian metric on $Y_1 \subset \C^d$ and
$h_2$ the restriction to $Y_2 \subset Y$ of the hermitian metric on $Y$, which was
induced by the embedding $Y\subset \C^n$. Then $\wt{Y}$ becomes a hermitian manifold
with hermitian metric $h:=\chi_1 h_1 + \chi_2 h_2$.
Now, we define the biholomorphic extension
$$\wt{\Pi}: \C^d \setminus \wt{A} \rightarrow \wt{Y},\ \ z\mapsto 
\left\{\begin{array}{ll}
z &, \mbox{if } z\in Y_1,\\
\Pi'(z) &, \mbox{if } z\in P_r(0).
\end{array}\right.$$

Now, cover $\C^d \setminus \wt{A}$ with (countably many) balls $B_1, B_2, ...$, such that there are orthonormal
coordinates in $\wt{\Pi}(B_j)$ and finite intersections of such balls are Stein.
Let $G_j: B_j\rightarrow GL_d(\C)$ be the complex Jacobian of $\wt{\Pi}$
in these coordinates and $F_j:=\det G_j$, which is a holomorphic function on $B_j$.
Here we do not write $(\det \Jac_\C)$, because, unlike the determinant of the real Jacobian,
this expression is not invariant under change of coordinates.

But we know that
\begin{eqnarray*}
\det \Jac_\R \wt{\Pi} (z)= |F_j(z)|^2
\end{eqnarray*}
for $z\in B_j$ (see e.g. \cite{Ra}, Lemma I.2.1). 
So, under a change of coordinates,
we deduce that $|F_j(z)|=|F_k(z)|$ for $z\in B_j\cap B_k$. The quotients
$c_{jk}:=F_j/F_k$
are holomorphic functions on $B_j\cap B_k$ with $|c_{jk}|\equiv 1$,
hence locally constant with values in $S^1\subset\C$. It is clear that the
$\{c_{jk}\}$ form a multiplicative $1$-cocycle with values in the 
constant sheaf $\overline{\C}$. We would prefer an additive cocycle.
Hence, choose locally constant functions $a_{jk}$ with values in $\R$
such that $c_{jk}=e^{i\cdot a_{jk}}$. But we also require that
$a_{jk}+a_{kl}+a_{lj}=0$. We may assume that this is satisfied
if the underlying space $\C^d \setminus \wt{A}$ has the Oka property,
namely $H^2(\C^d \setminus \wt{A},\Z)=0$ (cf. \cite{Ra}, VI.5.4 for more detailed
information). But $\C^d\setminus \wt{A}$ is homotopy equivalent to the sphere 
$S^{2d-1}$ and therefore the Oka property is satisfied since $d\geq 2$.
So, we assume that $\{a_{jk}\}$ is an additive $1$-cocycle with values in the
constant sheaf $\overline{\R}$. Now, recall that $\mathcal{B}=\{B_j\}$ is an
acyclic covering of $\C^d\setminus \wt{A}$.

Using again the fact that this space
is homotopic equivalent to a sphere $S^{2d-1}$, we observe that 
$\check{H}^1(\mathcal{B},\overline{\R}) \cong H^1(\C^d\setminus \wt{A}, \R)=0$, too.
But vanishing of the first \v{C}ech-Cohomology group implies that the $1$-cocycle
$\{a_{jk}\}$ is solvable. Thus, there exist locally constant functions $a_j$
on $B_j$ with values in $\R$ such that $a_{jk}=a_j-a_k$.
Consequently, $c_j:=e^{i\cdot a_j}$ are holomorphic functions on $B_j$
such that $c_{jk}=c_j/c_k$ and $|c_j|\equiv 1$. Now 
\begin{eqnarray*}
J(z) := F_j(z) c_j^{-1}(z)\ \ ,\ \mbox{ for } z\in B_j,
\end{eqnarray*}
is a globally well-defined holomorphic function on $\C^d\setminus\wt{A}$, for
on $B_j\cap B_k$:
$$F_j \cdot c_j^{-1} = F_k \cdot c_{jk}\cdot c_j^{-1} 
= F_k\cdot c_{j}\cdot c_{k}^{-1}\cdot c_j^{-1} = F_k\cdot c_k^{-1}.$$
Moreover, for $z\in P_{r'}(0)\setminus \wt{A}$:
$$|J(z)|^2=|F_j(z)|^2=\det \Jac_\R \wt{\Pi}(z) = \det\Jac_\R \Pi(z).$$
But as $J$ is a holomorphic function on $\C^d\setminus\wt{A}$,
it extends holomorphically across $\wt{A}$  by Hartog's extension theorem.
Continuity implies that $|J(z)|^2 = \det\Jac_\R \Pi(z)$ for all $z\in P_{r'}(0)$.
We summarize (the case $d=1$ is almost trivial):

\begin{lem}\label{lem:detjac}
Let $\pi: M\rightarrow Y$ be a desingularization as in the introduction and
$\Psi: P_r(0)\subset \C^d \rightarrow M$ a nice holomorphic polydisc
(with respect to $\pi$), $\Pi:=\pi\circ\Psi$. Let $0<r'<r$.
Then there exists a holomorphic function $J$ on $P_{r'}(0)\subset\subset P_r(0)$
such that
$$\det \Jac_\R \Pi(z) = |J(z)|^2\ \ \mbox{ for all } z\in P_{r'}(0).$$
If $W$ is an open subset of $\Psi^{-1}(Y_{reg})$, it follows that
$$\int_{\Pi(W)} g\ dV_Y = \int_W g\circ\Psi\ |J|^2 dV_{\C^d}$$
for measurable functions $g$ on $\Pi(W)$. Thus:
$g\in L^2(\Pi(W))$ $\Leftrightarrow$ $J\cdot \Pi^* g\in L^2(W)$.
\end{lem}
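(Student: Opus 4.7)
The overall strategy is to construct $J$ locally and patch. Since $\Psi$ is biholomorphic, working with $\Pi=\pi\circ\Psi$ is equivalent to working with $\pi$ on $\Psi(P_r(0))$, and $\Pi$ restricts to a biholomorphism from $P_r(0)\setminus\wt{A}$ onto an open subset of $Y_{reg}$, where $\wt{A}=\Psi^{-1}(X)=\{z_1\cdots z_k=0\}$. Over a small enough ball $B\subset P_r(0)\setminus\wt{A}$, we can select holomorphic coordinates on the image in which the determinant $F$ of the complex Jacobian of $\Pi$ is a holomorphic function satisfying $|F|^2=\det\Jac_\R\Pi$ pointwise (the standard identity for holomorphic maps in orthonormal target frames). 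The only question is whether these locally defined $F$'s can be glued into one holomorphic $J$ on all of $P_{r'}(0)$.

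Cover $P_r(0)\setminus\wt{A}$ by such balls $\{B_j\}$ with corresponding holomorphic determinants $F_j$. On overlaps, the ratios $c_{jk}:=F_j/F_k$ have modulus one, hence are locally constant with values in $S^1$, and thereby constitute a \v{C}ech $1$-cocycle with coefficients in the constant sheaf $\underline{S^1}$. If one can write $c_{jk}=c_j/c_k$ for local holomorphic $c_j$ of modulus one, then $J:=F_j c_j^{-1}$ is globally well-defined and satisfies $|J|^2=|F_j|^2=\det\Jac_\R\Pi$ on $P_r(0)\setminus\wt{A}$.

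The main obstacle is topological, and this is where the only real work lies: splitting the $\underline{S^1}$-cocycle requires $H^2(\cdot,\Z)=0$ (to lift $c_{jk}=e^{i a_{jk}}$ to an additive cocycle $a_{jk}\in\underline{\R}$ obeying the exact cocycle relation) together with $H^1(\cdot,\R)=0$ (to solve the additive cocycle). The naive base $P_r(0)\setminus\wt{A}\cong(\C^*)^k\times\C^{d-k}$ has the wrong topology, so I would replace it by one that does. Concretely, I glue the Euclidean exterior $\C^d\setminus\overline{P_{r'}(0)\cup\wt{A}}$ to the image $\Pi(P_r(0)\setminus\wt{A})\subset Y$ along $\Pi$ itself, producing a hermitian manifold $\wt{Y}$ and a biholomorphic extension $\wt{\Pi}:\C^d\setminus\wt{A}\to\wt{Y}$. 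The new base $\C^d\setminus\wt{A}$ deformation retracts onto $S^{2d-1}$ (since $\wt{A}\subset P_r(0)$), so for $d\geq 2$ both required cohomology groups vanish; the case $d=1$ is trivial since $\wt{A}$ is then discrete and the local patches can be chosen simply connected. To use the \v{C}ech-to-singular comparison I will also require that finite intersections of the $B_j$ be Stein, which is arranged by starting from a cover by small balls.

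Splitting the cocycle on $\C^d\setminus\wt{A}$ then yields a holomorphic $J$ there with $|J|^2=\det\Jac_\R\wt{\Pi}$, which coincides with $\det\Jac_\R\Pi$ on $P_{r'}(0)\setminus\wt{A}$ (where the partition of unity defining the metric of $\wt{Y}$ selects only the $Y$-factor). Since $|J|^2$ extends continuously across the hypersurface $\wt{A}$, $J$ is locally bounded there, and Riemann's extension theorem produces a holomorphic extension of $J$ to all of $P_{r'}(0)$; continuity forces $|J|^2=\det\Jac_\R\Pi$ on $P_{r'}(0)$. The integral identity is then the classical change of variables for the biholomorphism $\Pi|_W$, using $\Pi^*dV_Y=(\det\Jac_\R\Pi)\,dV_{\C^d}=|J|^2\,dV_{\C^d}$, and the $L^2$ equivalence follows at once.
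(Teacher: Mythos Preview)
Your proposal is correct and follows essentially the same route as the paper: the artificial codomain $\wt{Y}$, the extension $\wt{\Pi}:\C^d\setminus\wt{A}\to\wt{Y}$, the $S^1$-valued cocycle $c_{jk}=F_j/F_k$, and the use of $H^2(\C^d\setminus\wt{A},\Z)=H^1(\C^d\setminus\wt{A},\R)=0$ via the homotopy equivalence to $S^{2d-1}$ are exactly the paper's argument. The only cosmetic difference is in the final extension of $J$ across $\wt{A}$: you invoke Riemann's extension theorem using local boundedness of $|J|^2$, whereas the paper applies Hartogs' theorem on $\C^d$ (for $d\geq 2$) directly; both are valid.
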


Note that there is no hope in constructing a holomorphic function
with that properties globally on $M$ since that would give a holomorphic function $h$ 
on $Y$ vanishing exactly in the singular set.
This would have a least two strong consequences. On the one hand $Y_{reg}$ would be holomorphically
convex, hence a Stein space itself. On the other hand, $Y$ could not be normal,
since, in that case, $h^{-1}$ would extend holomorphically across the (at least $2$-codimensional)
singular set $Y_{sing}$ by the second Extension Theorem of Riemann.

\newpage
\section{Extension of the $\dq$-Equation across Analytic Sets}

Let us state Riemann's Extension Theorems in the following manner:

\begin{thm}\label{thm:riemann}
Let $A$ be an analytic subset of an open set $D\subset\C^n$
of codimension $k>0$, and $f\in L^p_{loc}(D)$ for some $p\geq 1$ with
$\dq f = 0$ on $D\setminus A$. Then
$\dq f = 0$ on the whole set $D$ provided $p\geq 2$ or $k>1$,
and in that case $f\in\mathcal{O}(D)$.
\end{thm}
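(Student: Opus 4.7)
The plan is to reduce to a distributional identity, handle the range $p\geq 2$ by a direct cutoff argument, and dispatch the range $k>1$ via Hartogs' extension theorem. The conclusion $f\in\OO(D)$ follows at the end from $\dq f=0$ on $D$ by hypoellipticity of $\dq$ (Weyl's lemma).

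First, Weyl's lemma applied on the open manifold $D\setminus A$ allows me to correct $f$ on a null set so that $f\in\OO(D\setminus A)$. Since $A$ has Lebesgue measure zero, the only remaining question is whether $\dq f$ has a distributional contribution supported in $A$, and it suffices to show $\int_D f\,\dq\phi=0$ for every test form $\phi\in C^\infty_c(D)$ of bidegree $(n,n-1)$.

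For $p\geq 2$ I would run the standard cutoff argument. Take $\chi_\epsilon=\rho(d_A/\epsilon)$ with $|\dq\chi_\epsilon|\leq C/\epsilon$ and $\supp \dq\chi_\epsilon \subset\{d_A\leq\epsilon\}$. Since $\chi_\epsilon\phi\in C^\infty_c(D\setminus A)$, the hypothesis gives $\int_D f\,\dq(\chi_\epsilon\phi)=0$, hence
$$\int_D f\chi_\epsilon\,\dq\phi\ =\ -\int_D f\,\dq\chi_\epsilon\wedge\phi.$$
The left side tends to $\int_D f\,\dq\phi$ by dominated convergence. For the right side, Hölder's inequality with $q=p/(p-1)$ combined with the Lojasiewicz-type volume bound $\mathrm{Vol}(\{d_A\leq\epsilon\}\cap\supp\phi)\leq C\epsilon^{2k}$ yields
$$\Bigl|\int_D f\,\dq\chi_\epsilon\wedge\phi\Bigr|\ \leq\ C\|\phi\|_\infty\,\|f\|_{L^p(\{d_A\leq\epsilon\})}\,\epsilon^{2k/q-1}.$$
When $p\geq 2$ one has $2k/q-1\geq k-1\geq 0$, so the $\epsilon$-factor stays bounded while $\|f\|_{L^p(\{d_A\leq\epsilon\})}\to 0$ by absolute continuity of the integral, giving the required vanishing.

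For the remaining range $k>1$ with $p<2$ the cutoff estimate degenerates, and I would argue indirectly through the Kugelsatz: since $f\in\OO(D\setminus A)$ and $A$ has complex codimension at least two, Hartogs' theorem extends $f$ to a holomorphic $\wt f\in\OO(D)$; as $A$ has Lebesgue measure zero, $\wt f$ agrees with $f$ almost everywhere, represents the same class in $L^p_{loc}(D)$, and trivially $\dq\wt f=0$ in $\mathcal{D}'(D)$. The main obstacle I expect is the endpoint $p=2$, $k=1$, where $\|\dq\chi_\epsilon\|_{L^2}$ is merely bounded rather than tending to zero and all of the decay has to come from the absolute-continuity factor; the range $k\geq 2$ with $p$ close to $1$ is outside the reach of any such cutoff calculation and must be treated by the altogether different Hartogs mechanism.
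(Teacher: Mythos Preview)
The paper does not actually prove this theorem; it merely cites \cite{Rp2} and then proves the generalization Theorem~\ref{thm:extension}. Your cutoff argument for the range $p\geq 2$ is essentially the same technique the paper uses in that proof: H\"older against $\|\dq\chi_\epsilon\|_{L^q}$, the volume bound on an $\epsilon$-tube around $A$, and absolute continuity of $\|f\|_{L^p}$ on shrinking tubes. The paper makes the estimate clean by first stratifying $A$ into smooth pieces and reducing to $A$ a linear subspace; you invoke a Lojasiewicz-type tube volume bound directly, which is fine in spirit but note that $d_A$ is only Lipschitz, so $\chi_\epsilon\phi$ is not literally a smooth test form---either mollify, or do the stratification reduction as the paper does.

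For $k>1$ with $p$ close to $1$ you are right that the cutoff estimate no longer closes (indeed Theorem~\ref{thm:extension} only reaches $p\geq 2k/(2k-1)>1$), and your indirect route through holomorphic extension is the correct idea. One terminological correction: the result you need is the \emph{second Riemann removable singularity theorem} (holomorphic functions extend across analytic sets of codimension $\geq 2$), not the Kugelsatz, which concerns compact holes in domains of $\C^n$ for $n\geq 2$. With that adjustment your argument is complete.
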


All differential equations have to be understood in the sense of distributions.
A proof of Theorem \ref{thm:riemann} can be found in \cite{Rp2}.
But what happens if the right hand side of $\dq f=0$
is replaced by some $g\in L^1_{(0,1), loc}(D)$ which is not necessarily zero?
The objective of this section is to show the following
Extension Theorem,
which is an important tool in the study of the $\dq$-equation on analytic varieties:

\begin{thm}\label{thm:extension}
Let $A$ be an analytic subset of an open set $D\subset\C^n$
of codimension $k>0$.
Furthermore, let $f\in L^{(2k)/(2k-1)}_{(0,q),loc}(D)$
and $g\in L^1_{(0,q+1),loc}(D)$ be differential forms
such that $\dq f = g \ \ \mbox{ on } D\setminus A$.
Then $\dq f= g$ on the whole set $D$.
\end{thm}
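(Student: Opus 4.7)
The plan is a cut-off and integration-by-parts argument. Since $\dq f = g$ holds on $D\setminus A$ in the distributional sense, my goal is to approximate an arbitrary test form $\varphi$ with compact support in $D$ by forms with support in $D\setminus A$, and control the resulting error terms by exploiting the sharp balance between the integrability exponent $2k/(2k-1)$ of $f$ and the codimension $k$ of $A$.

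\paragraph{Construction of cut-offs.} The first step is to produce a family of smooth functions $\chi_\varepsilon : D \to [0,1]$ with the following properties: $\chi_\varepsilon$ vanishes in a neighborhood (depending on $\varepsilon$) of $A$, equals $1$ outside a fixed neighborhood of $A$, tends to $1$ pointwise on $D\setminus A$ as $\varepsilon\to 0$, and satisfies
\[
\|\dq\chi_\varepsilon\|_{L^{2k}(K)}\longrightarrow 0\quad\text{as }\varepsilon\to 0
\]
for every compact $K\subset D$. Locally near each point of $A$, write $A$ as the zero set of finitely many holomorphic functions $h_1,\dots,h_m$, put $\rho=\sum_j|h_j|^2$, and choose $\eta:\R\to[0,1]$ smooth, non-decreasing, with $\eta\equiv 0$ on $(-\infty,0]$ and $\eta\equiv 1$ on $[1,\infty)$. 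Then
\[
\chi_\varepsilon(z):=\eta\!\left(\frac{\log\rho(z)-2\log\varepsilon}{-2\log\varepsilon}\right)
\]
vanishes on $\{\rho\le\varepsilon^2\}$ and equals $1$ on $\{\rho\ge 1\}$. A short computation using $|\dq\rho|\le C\sqrt{\rho}$ yields $|\dq\chi_\varepsilon|\le C/(|\log\varepsilon|\,\sqrt{\rho})$ in the transition zone. Combined with the comparability $\rho\sim\dist(\cdot,A)^2$ and the volume estimate $\mathrm{vol}\{\dist(\cdot,A)\le r\}\le C r^{2k}$ valid for analytic sets of codimension $k$, a layer-cake computation gives
\[
\int |\dq\chi_\varepsilon|^{2k}\,dV \;\le\; \frac{C}{|\log\varepsilon|^{2k}}\int_{\varepsilon\le d\le 1}\frac{dV}{d^{2k}} \;\le\; \frac{C'}{|\log\varepsilon|^{2k-1}} \;\longrightarrow\; 0.
\]
Globally, these local cut-offs are patched together via a partition of unity subordinate to a cover of a neighborhood of $A\cap\supp\varphi$.

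\paragraph{Integration by parts and passage to the limit.} Given a test form $\varphi$ of bidegree $(n,n-q-1)$ with compact support in $D$, the product $\chi_\varepsilon\varphi$ has compact support in $D\setminus A$, so the hypothesis yields
\[
\int_D f\wedge\dq(\chi_\varepsilon\varphi)=(-1)^{q+1}\int_D g\wedge\chi_\varepsilon\varphi.
\]
Expanding $\dq(\chi_\varepsilon\varphi)=\dq\chi_\varepsilon\wedge\varphi+\chi_\varepsilon\,\dq\varphi$ and letting $\varepsilon\to 0$, the terms $\int\chi_\varepsilon f\wedge\dq\varphi$ and $\int\chi_\varepsilon g\wedge\varphi$ converge to $\int f\wedge\dq\varphi$ and $\int g\wedge\varphi$ respectively by dominated convergence (using $f,g\in L^1_{\mathrm{loc}}$ and the boundedness of $\varphi,\dq\varphi$ on their compact support). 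The error term is controlled by H\"older with conjugate exponents $2k/(2k-1)$ and $2k$:
\[
\left|\int_D f\wedge\dq\chi_\varepsilon\wedge\varphi\right| \;\le\; C\,\|\varphi\|_{L^\infty}\,\|f\|_{L^{2k/(2k-1)}(\supp\varphi)}\,\|\dq\chi_\varepsilon\|_{L^{2k}(\supp\varphi)},
\]
which tends to $0$ by the cut-off estimate. This yields $\int f\wedge\dq\varphi=(-1)^{q+1}\int g\wedge\varphi$ for every test form, hence $\dq f=g$ on all of $D$.

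\paragraph{Main obstacle.} The decisive point is the construction of cut-offs in Step 2: the exponent $2k/(2k-1)$ is critical, since linear cut-offs at scale $\varepsilon$ give $\|\dq\chi_\varepsilon\|_{L^{2k}}$ bounded but not small. It is precisely the logarithmic shape of the cut-off, together with the sharp codimension-$k$ volume estimate for analytic sets, that makes the borderline H\"older estimate converge to zero. A minor technical nuisance is that $\dist(\cdot,A)$ is not smooth, but this is sidestepped by working with the smooth function $\rho=\sum|h_j|^2$; since $A$ may have singularities, the desired volume bound must be invoked in the general analytic (not just smooth) setting.
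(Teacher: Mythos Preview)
Your argument is correct in spirit and takes a genuinely different route from the paper. The key divergence is in how the error term $\int f\wedge\dq\chi\wedge\varphi$ is made to vanish. The paper uses a \emph{linear} cutoff $\chi_r(z)=\chi(\dist(z,M)/r)$ at scale $r$, for which $\|\dq\chi_r\|_{L^{2k}}$ is merely \emph{bounded} (indeed $r^{2k-p}=1$ when $p=2k$); the error then vanishes because $\|f\wedge\varphi\|_{L^{2k/(2k-1)}(U(r))}\to 0$ by absolute continuity of the integral as $|U(r)|\to 0$. Your logarithmic cutoff instead forces $\|\dq\chi_\varepsilon\|_{L^{2k}}\to 0$ directly, with the $f$-factor held fixed. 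Both mechanisms work at the critical exponent; yours is the standard trick for borderline removable-singularity results, while the paper's cutoff is simpler to set up but relies on the absolute-continuity observation. The paper also handles general $A$ by stratifying into smooth pieces and applying the submanifold case inductively, whereas you aim to treat $A$ in one shot via the analytic volume bound.

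There is, however, a small gap in your treatment of singular $A$: the comparability $\rho\sim\dist(\cdot,A)^2$ you invoke holds only near regular points of $A$; it fails wherever the differentials $dh_j$ drop rank. The inequality $\rho\le C\,d^2$ is always true, but the reverse bound $\rho\ge c\,d^2$ is not, and it is precisely that direction you need to pass from $\int\rho^{-k}\,dV$ to $\int d^{-2k}\,dV$. (You flag the volume bound as the delicate point, but the real issue is this comparability.) Two clean fixes: either stratify $A$ and induct on codimension as the paper does, or replace $\sqrt{\rho}$ by a Whitney-type smooth regularization $\tilde d$ of $\dist(\cdot,A)$ with $\tilde d\sim d$ and $|\nabla\tilde d|\le C$, and run your logarithmic cutoff with $\tilde d$; then only the volume estimate $\mathrm{vol}\{d\le r\}\le Cr^{2k}$ for analytic sets is needed, and that is standard.
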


Note that the condition on $f$ cannot be relaxed furthermore.
To see this, consider the Bochner-Martinelli-kernel
$$K_0(\zeta,z) = -\frac{(n-2)!}{2\pi^n} *\partial_\zeta \|\zeta-z\|^{2-2n},$$
which is a $\dq_\zeta$-closed $(n,n-1)$-form on $\C^n\setminus\{z\}$ for $z\in\C^n$ fixed.
Moreover, $\left|K_0(\zeta,z)\right| \lesssim \|\zeta-z\|^{1-2n}$
implies that
$K_0(\cdot,z) \in L^p_{(n,n-1),loc}(\C^n)$
for $1\leq p<\frac{2n}{2n-1}$, but not for $p=\frac{2n}{2n-1}$.
Now, the equation $\dq_\zeta K_0(\cdot,z)=0$ cannot be true on the whole set $\C^n$,
for, in that case the Bochner-Martinelli representation formula would not be true.
Hence, the assumption $f\in L^{\frac{2k}{2k-1}}$ in Theorem \ref{thm:extension} cannot be dropped,
at least not in codimension $k=n$.
Now to the proof of Theorem \ref{thm:extension}:

\begin{proof}
Assume first of all that $A$ is a complex submanifold in $D$ of dimension $l=n-k <n$.
Since we have to show a local statement which is invariant under biholomorphic change 
of coordinates, we may assume that $A= D \cap M$ for
$$M:= \{z=(z',z'')\in \C^l\times \C^k=\C^n: z''=0\},$$
and that $D$ is bounded. For $r>0$, define
$$U(r):=\{z\in\C^n: \dist(z,M)=\|z''\|<r\}.$$
Choose a smooth cut-off function $\chi\in C^\infty_{cpt}(\R)$ with $|\chi|\leq 1$,
$\chi(t)=1$ if $|t|\leq 1/2$, $\chi(t)=0$ if $|t|\geq 2/3$, and $|\chi'| \leq 8$.
Now, let
$$\chi_r(z):=\chi(\frac{\dist(z,M)}{r}).$$
Then $\chi_r\equiv 1$ on $U(r/2)$ and $\supp \chi_r \subset U(3r/4)$.
$\chi_r$ is smooth
and we have:
$$\|\nabla \chi_r\|\leq |\chi'| / r \leq 8/r.$$
Since $D$ is bounded, there is $R>0$ such that $D\subset B_R(0)$. It follows
\begin{eqnarray*}
\int_{D}\|\nabla \chi_r\|^p dV_{\C^n}&\leq& 8^p \int_{D\cap U(r)} r^{-p} dV_{\C^n}
\leq 8^p (2R)^{2l} r^{-p} \int_{\{x''\in\C^k:\|x''\|<r\}} dV_{\C^k},
\end{eqnarray*}
and we conclude:
\begin{eqnarray}\label{eq:dchi}
\|\nabla\chi_r\|^p_{L^p(D)} \leq C(R,l,p) r^{2k-p}.
\end{eqnarray}
What we have to show is that
\begin{eqnarray}\label{eq:dqe2}
\int_D f\wedge \dq \phi =(-1)^{q+1} \int_D g\wedge\phi
\end{eqnarray}
for all smooth $(n,n-q-1)$-forms $\phi$ with compact support in $D$.
By assumption, $\dq f=g$ on $D\setminus A$. That leads to:
\begin{eqnarray*}
\int_D f\wedge\dq\phi &=& \int_D f\wedge\chi_r \dq\phi +\int_{D\setminus M} f \wedge(1-\chi_r) \dq\phi\\
&=& \int_D f \wedge\chi_r \dq\phi + \int_{D\setminus M} f \wedge\dq[ (1-\chi_r) \phi]-
\int_{D\setminus M} f\wedge\dq(1-\chi_r)\wedge \phi\\
&=& \int_D f \wedge\chi_r \dq\phi + (-1)^{q+1} \int_{D\setminus M} g \wedge(1-\chi_r) \phi + 
\int_{D\setminus M} f \wedge\dq\chi_r\wedge\phi.
\end{eqnarray*}
Let's consider
$$\int_D f\wedge\chi_r\dq\phi\ \  \mbox{ and }\ \  \int_{D\setminus M} g \wedge\chi_r\phi.$$
Since $|\chi_r|\leq 1$, we have
\begin{eqnarray*}
|f\wedge\chi_r\dq\phi| &\leq& |f\wedge\dq\phi| \in L^1,\\
|g\wedge\chi_r\phi| &\leq& |g\wedge\phi| \in L^1,
\end{eqnarray*}
and we know that $f\wedge\chi_r\dq\phi \rightarrow 0$ and
$g\wedge\chi_r\phi \rightarrow 0$ pointwise if $r\rightarrow 0$.
Hence, Lebesgue's Theorem on dominated convergence gives:
\begin{eqnarray*}
\lim_{r\rightarrow 0} \int_D f \wedge\chi_r \dq\phi &=& 0,\\
\lim_{r\rightarrow 0} \int_{D\setminus M} g \wedge(1-\chi_r) \phi &=&  \int_{D\setminus M} g\wedge\phi
=\int_D g \wedge\phi.
\end{eqnarray*}
To prove \eqref{eq:dqe2}, only
$$\lim_{r\rightarrow 0}\int_{D\setminus M} f\wedge\dq\chi_r\wedge\phi=0$$
remains to show. Let $p:=2k$ and $q:=2k/(2k-1)$ (satisfying $1/p+1/q=1$).
Using \eqref{eq:dchi} and the H\"older-inequality, we get
\begin{eqnarray*}
\lim_{r\rightarrow 0} \|f\wedge\dq\chi_r\wedge \phi\|_{L^1(D)}
&=& \lim_{r\rightarrow 0} \|f\wedge\dq\chi_r\wedge \phi\|_{L^1(U(r))}\\
&\leq& \lim_{r\rightarrow 0} \|f \wedge\phi\|_{L^q(U(r))}\|\dq\chi_r\|_{L^p(D)}\\
&\leq& C(R,l,p)^{1/p} \lim_{r\rightarrow 0} \|f \wedge\phi\|_{L^q(U(r))}.
\end{eqnarray*}
Since $f\in L^q$, we conclude
$$\lim_{r\rightarrow 0} \|f \wedge\phi\|_{L^q(U(r))} = 0$$
(cf. for instance \cite{Alt}, Lemma A 1.16), and the Theorem is proved in case $A$ is a complex submanifold.

Now, let $A$ be a closed analytic subset in $D$ of codimension $k=n-l>0$.
Let $A_l := \Reg(A)$ be the complex manifold of the regular points of $A$,
which is a complex submanifold in $D$ of dimension $\leq l$,
and let
$S_l := \Sing(A) = A\setminus \Reg(A)$ be the singular set,
which is a closed analytic subset of dimension $\leq l-1$.
Now, let $A_{l-1} := \Reg(S_l)$ and $S_{l-1} := \Sing(S_l)$.
Going on inductively, we define a disjoint union of complex manifolds $A_j$
of dimension $\leq j \leq l$, satisfying
$\bigcup A_j=A$.
For $j=0, ..., l$, let $M_j$ be the union of all complex manifolds of dimension $j$
which are contained in $A_k$ for all $k\geq j$.
Then the $M_j$ are complex manifolds of dimension $j$ and $A=\bigcup M_j$.
Now, the statement of the theorem for manifolds shows that $\dq f=g$ 
on $(D\setminus A)\cup M_l$.
Another application gives $\dq f=g$ on
$(D\setminus A)\cup M_l\cup M_{l-1}$, and inductively, we conclude that
the statement of the Theorem is true for analytic sets.
\end{proof}

Clearly, Theorem \ref{thm:extension} is also true if $f$ is an $(r,q)$-form,
and it generalizes to other first order differential operators.
For the sake of completeness, we will now consider the situation
where $A$ is a regular real hypersurface. Letting $k\rightarrow 1/2$ in the 
statement of Theorem \ref{thm:extension} would lead to the conjecture
that $f\in L^\infty$ could be sufficient. But as a real hypersurface
separates $D$ into disjoint parts, it is seen easily that we need a little more:

\begin{thm}\label{thm:extension2}
Let $M$ be a regular real hypersurface in an open set $D\subset\C^n$.
Furthermore, let $f\in C^0_{0,q}(D)$
and $g\in L^1_{(0,q+1),loc}(D)$ be differential forms
such that $\dq f = g$ on $D\setminus M$.
Then $\dq f= g$ on the whole set $D$.
\end{thm}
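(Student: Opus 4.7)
The plan is to adapt the cutoff strategy of the proof of Theorem~\ref{thm:extension} to the real codimension one setting. Since the claim is local and $\dq f=g$ is already known on $D\setminus M$, a partition of unity reduction lets me work in a small coordinate chart centered at a point of $M$. It suffices to verify
$$\int_D f\wedge \dq\phi = (-1)^{q+1}\int_D g\wedge \phi$$
for every smooth compactly supported $(n,n-q-1)$-form $\phi$ in such a chart. I choose smooth real coordinates $(y,t)\in\R^{2n-1}\times\R$ straightening $M$ to $\{t=0\}$, and I pick an \emph{even} bump $\chi\in C^\infty_{cpt}(\R)$ with $\chi\equiv 1$ on $[-\tfrac12,\tfrac12]$ and $\supp\chi\subset[-\tfrac23,\tfrac23]$. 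Set $\chi_r(z):=\chi(t/r)$, so that $\chi_r\equiv 1$ on a neighborhood of $M$ and $\supp\chi_r$ shrinks to $M$ as $r\to 0$.

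Exactly as in the proof of Theorem~\ref{thm:extension}, splitting $1=\chi_r+(1-\chi_r)$ and applying $\dq f=g$ to the smooth test form $(1-\chi_r)\phi$, which is compactly supported off $M$, yields
$$\int_D f\wedge\dq\phi = \int_D f\wedge\chi_r\dq\phi + (-1)^{q+1}\int_{D\setminus M} g\wedge(1-\chi_r)\phi + \int_{D\setminus M} f\wedge\dq\chi_r\wedge\phi.$$
Two of the three terms behave as expected when $r\to 0$: the first tends to $0$ by dominated convergence with majorant $|f\wedge\dq\phi|\in L^1$, and the middle one tends to $(-1)^{q+1}\int_D g\wedge\phi$ for the same reason (note that $M$ has Lebesgue measure zero). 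The heart of the proof is to show that the third term also tends to zero.

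Writing $\dq\chi_r=r^{-1}\chi'(t/r)\,\dq t$, the wedge product $f\wedge\dq t\wedge\phi$ is a top-degree form whose coefficient $\tilde h(y,t)$ against $dy\wedge dt$ is a \emph{continuous} function, since $f$ is continuous and all other factors, including the Jacobian of the coordinate change, are smooth. Substituting $s=t/r$ converts the third term into
$$\int_{D\setminus M} f\wedge\dq\chi_r\wedge\phi = \iint \tilde h(y,rs)\,\chi'(s)\,dy\,ds,$$
and dominated convergence together with continuity of $\tilde h$ gives
$$\lim_{r\to 0}\int_{D\setminus M} f\wedge\dq\chi_r\wedge\phi = \int \tilde h(y,0)\,dy \cdot \int_\R \chi'(s)\,ds = 0,$$
because $\chi$ has compact support on $\R$.

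The main obstacle, and the reason the codimension-one case does not fit directly into the scheme of Theorem~\ref{thm:extension}, is that the H\"older-inequality argument there degenerates at real codimension one to the borderline exponents $(p,q)=(1,\infty)$: the uniform bound $|f|\leq\|f\|_{L^\infty(U(r))}$ simply does not shrink with $r$. The missing smallness must instead be extracted from a \emph{cancellation}, namely that $\chi_r$ sits symmetrically on the two sides of $M$ so that $\int_\R\chi'(s)\,ds=0$. Continuity of $f$ across $M$ is precisely what allows this one-dimensional cancellation to survive the passage to the limit; a mere $L^\infty$ hypothesis would permit $f$ to carry different essential traces on the two sides of $M$, destroying the cancellation, in accordance with the remark preceding the theorem.
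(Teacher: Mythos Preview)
Your proof is correct and follows essentially the same approach as the paper: both reduce to showing that $\int f\wedge\dq\chi_r\wedge\phi\to 0$ via a cancellation in the normal direction, the paper by exploiting the oddness $\partial_{\bar z}\chi_r(x)=-\partial_{\bar z}\chi_r(-x)$ and a Dirac-sequence argument, you by the substitution $s=t/r$ and $\int_\R\chi'=0$. One minor remark: the evenness of $\chi$ is not actually needed in your argument, since $\int_\R\chi'(s)\,ds=0$ holds for any compactly supported $\chi$.
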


\begin{proof}
We have to modify the proof of Theorem \ref{thm:extension} slightly.
Here, we have to consider the situation
\begin{eqnarray*}
M &:=& \{z=(z',z'')\in \C^{n-1}\times \C^1: x=\Re z''=0\},\\
U(r)& :=& \{z\in\C^n: \dist(z,M)=\|x\|<r\},
\end{eqnarray*} 
and choose $\chi_r$ as before. 
The only problem is to show that
\begin{eqnarray}\label{eq:dqe3}
\lim_{r\rightarrow 0}\int_{D\setminus M} f\wedge\dq\chi_r\wedge\phi=0
\end{eqnarray}
is still satisfied. But we calculate:
$$\frac{\partial \chi_r}{\partial \o{z}}(x)
= \frac{1}{2} \frac{\partial \chi_r}{\partial x}(x)
= -\frac{1}{2} \frac{\partial \chi_r}{\partial x}(-x)
= -\frac{\partial \chi_r}{\partial \o{z}}(-x).$$
Since the coefficients of $f$ and $\phi$ are continuous,
this implies \eqref{eq:dqe3}. In fact:
\begin{eqnarray*}
\lim_{r\rightarrow 0}\int_{\R^-} g(t) \frac{\partial \chi_r}{\partial\o{z}}(t) dt &=& \frac{1}{2}g(0),\\
\lim_{r\rightarrow 0}\int_{\R^+} g(t) \frac{\partial \chi_r}{\partial\o{z}}(t) dt &=& - \frac{1}{2}g(0),
\end{eqnarray*}
if $g$ is a continuous function, 
for this is nothing else but the convolution with a Dirac-sequence.
\end{proof}

We conclude this section with a nice little application of Theorem \ref{thm:extension2}.
It can be used for the construction of composed $\dq$-solution formulas. We consider
the complex projective space $\C\mathbb{P}^1$ with homogeneous coordinates $[w:z]$,
which is covered by the two charts
\begin{eqnarray*}
\Psi_a:  \C \rightarrow \C\mathbb{P}^1,\ \ \ a &\mapsto& [1:a]\ ,\\
\Psi_b:  \C \rightarrow \C\mathbb{P}^1,\ \ \ b &\mapsto& [b:1]\ .
\end{eqnarray*}

\vspace{2mm}
If $g$ is a $(0,1)$-form on $\C\mathbb{P}^1$, let $\Psi_a^* g = g_a d\o{a}$ and $\Psi_b^* g=g_b d\o{b}$.
For $1\leq p \leq \infty$, we define
$$\|g\|_{L^p_{0,1}(\C\mathbb{P}^1)} = \|g_a\|_{L^p(\Delta_1(0))} + \|g_b\|_{L^p(\Delta_1(0))}.$$
Other definitions lead to equivalent norms since $\C\mathbb{P}^1$ is a compact manifold.
Now, fix $2<p<\infty$ and let $g\in L^p_{0,1}(\C\mathbb{P}^1)$.
Then Cauchy's integral formula
$$I g_a (a):=\frac{1}{2\pi i}\int_{\Delta_1(0)} g_a(t) \frac{d\o{t}\wedge dt}{t-a}$$
is a continuous $\dq$-solution operator $L^p(\Delta_1(0)) \rightarrow C^\alpha(\Delta_1(0))$
on the unit disc $\Delta_1(0)$ for all $\alpha\in (0,1-\frac{2}{p}]$ (cf. \cite{He}, Hilfssatz 15).
Moreover, $Ig_a$ is continuous on $\C$ and $Ig_a(a)\rightarrow 0$ for $|a|\rightarrow \infty$.
So,
$$f_a\big(w:z\big):=(\Psi_a^{-1})^* I g_a \big(w:z\big) = I g_a \left(\frac{z}{w}\right)$$
is a continuous function on $\C\mathbb{P}^1$ which satisfies $\dq f_a = g$ on $\Psi_a(\Delta_1(0))$
and is holomorphic outside $\Psi_a(\o{\Delta_1(0)})$. We can now define a composed
integral operator:
\begin{eqnarray*}
S g \big(w:z\big) &:=& I g_a\left(\frac{z}{w}\right) + I g_b\left(\frac{w}{z}\right),
\end{eqnarray*}
which is a continuous operator $L^p_{0,1}(\C\mathbb{P}^1) \rightarrow C^\alpha(\C\mathbb{P}^1)$
for all $\alpha\in (0,1-\frac{2}{p}]$. Let 
$$M:=\{[w:z]\in\C\mathbb{P}^1: |w|/|z|=1\}=b\Psi_a(\Delta_1(0))=b\Psi_b(\Delta_1(0)).$$
Then $\dq Sg=g$ on $\C\mathbb{P}^1\setminus M$. But $Sg$ is continuous and
Theorem \ref{thm:extension2} guarantees $\dq Sg=g$ on the whole projective space.
We summarize:

\begin{thm}
Let $2<p<\infty$. The integral operator
\begin{eqnarray*}
Sg\big(w:z\big) = \frac{1}{2\pi i} \int_{\{|t|<1\}} \frac{\Psi_a^* g (t) \wedge dt}{t-z/w}
&+& \frac{1}{2\pi i} \int_{\{|t|<1\}} \frac{\Psi_b^* g (t) \wedge dt}{t-w/z}
\end{eqnarray*}
is a bounded operator $L^p_{0,1}(\C\mathbb{P}^1)\rightarrow C^\alpha(\C\mathbb{P}^1)$
for all $\alpha\in (0,1-\frac{2}{p}]$,
and it is a solution operator for the $\dq$-equation:  $\dq S g=g$.
\end{thm}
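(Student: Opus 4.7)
The strategy is to construct a continuous function on $\C\mathbb{P}^1$ out of two chart-wise Cauchy transforms and then invoke Theorem \ref{thm:extension2} to propagate the $\dq$-equation across the unit circle $M=\{[w:z]:|w|=|z|\}$, which is the only locus where the chart decomposition can fail.

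First I would recall Hilfssatz 15 of \cite{He}: for $2<p<\infty$ and $\alpha\in(0,1-2/p]$ the Cauchy transform $I:L^p(\Delta_1(0))\to C^\alpha(\overline{\Delta_1(0)})$ is bounded and solves the $\dq$-equation on the unit disc, while the same formula extends $Ig_a$ continuously to $\C$ as a function which is holomorphic on $\C\setminus\o{\Delta_1(0)}$ and decays to $0$ as $|a|\to\infty$. Setting $f_a:=(\Psi_a^{-1})^*Ig_a$ with value $0$ at $[0:1]$, and defining $f_b$ symmetrically, I would put $Sg:=f_a+f_b$. The identity
$$f_a\circ\Psi_b(b)=Ig_a(1/b)=-\frac{b}{2\pi i}\int_{\Delta_1(0)}\frac{g_a(t)}{1-tb}\,d\o t\wedge dt,\qquad |b|<1,$$
exhibits $f_a$ in the chart $\Psi_b$ as a holomorphic function of $b$ vanishing at $0$, so $f_a\in C^\alpha(\C\mathbb{P}^1)$ globally; the analogous statement holds for $f_b$.

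Next I would verify $\dq Sg=g$ on $\C\mathbb{P}^1\setminus M$. On $\Psi_a(\Delta_1(0))=\{|z|<|w|\}$ one has $\dq f_a=g$ by construction, while $f_b$ is holomorphic there, the set being contained in the complement of $\overline{\Psi_b(\Delta_1(0))}$; hence $\dq Sg=g$. The symmetric argument handles $\{|w|<|z|\}$, so $\dq Sg=g$ on $\C\mathbb{P}^1\setminus M$. Since $M$ is a smooth real hypersurface and $Sg\in C^0(\C\mathbb{P}^1)$, Theorem \ref{thm:extension2} upgrades the equation to $\dq Sg=g$ on all of $\C\mathbb{P}^1$ in the distributional sense. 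The norm bound $\|Sg\|_{C^\alpha(\C\mathbb{P}^1)}\lesssim\|g\|_{L^p_{0,1}(\C\mathbb{P}^1)}$ then follows from the triangle inequality and the chart-wise estimates.

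The step I expect to be most delicate is establishing global Hölder continuity of $f_a$ on all of $\C\mathbb{P}^1$ rather than merely on the closed chart-$a$ unit disc, since Henkin's bound controls $Ig_a$ only on $\overline{\Delta_1(0)}$ and one must treat the ``point at infinity'' $[0:1]$ of the chart $a$ separately. This is precisely the role of the explicit representation displayed above, which is smooth (even holomorphic) in $b$ near $0$. At the point $[1:0]$ there is nothing to verify, since it sits in the interior of the chart-$a$ unit disc and is already covered by Hilfssatz 15.
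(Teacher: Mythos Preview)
Your proposal is correct and follows essentially the same approach as the paper: chart-wise Cauchy transforms glued via Theorem~\ref{thm:extension2} across the unit circle $M$. You actually supply slightly more detail than the paper does on the global $C^\alpha$ regularity of $f_a$, via the explicit holomorphic representation in the $b$-chart, whereas the paper simply asserts the boundedness $L^p_{0,1}(\C\mathbb{P}^1)\to C^\alpha(\C\mathbb{P}^1)$ after noting continuity of $Ig_a$ on $\C$ and decay at infinity.
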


\newpage
\section{The Normal Bundle of Exceptional Sets}

Let $Y$ be a pure $d$-dimensional analytic set in $\C^n$ with an isolated singularity
at the origin and $\pi: M \rightarrow Y$ a desingularization as in the introduction.
Then, $Y$ is the Remmert reduction of $M$, and $M$ is a $1$-convex space. 
See \cite{CoMi} for a nice characterization of $1$-convex spaces.
$X=\pi^{-1}(\{0\})$ is the union of finitely many $(d-1)$-dimensional compact complex manifolds
$X_1, ..., X_k$ with only normal crossings. $X=\bigcup X_j$ is an exceptional set in the
sense of Grauert (\cite{Gr1}). 
The aim of this section is to provide an embedding
of a neighborhood of the zero section of the normal bundle of the $X_j$ into $M$, 
which is at least possible if $X$ is regular or if the dimension $d=2$.

Generally, if $X$ is a complex subspace of a complex space $Z$ with invertible ideal sheaf
$\mathcal{I}$ given locally by $\mathcal{I}|_{U_\alpha} = g_\alpha \mathcal{O}_Z|_{U_\alpha}$,
then the normal bundle of $X$ in $Z$ is the line bundle determined by the transition functions
$g_\alpha/g_\beta$ in $U_\alpha\cap U_\beta \cap X$.

Let $X$ be a compact complex manifold, $p: L\rightarrow X$ a holomorphic line bundle, and
$c(L)$ its Chern class in $H^2(X,\R)$. Then $L$ is called positive (negative) in the sense
of Kodaira, if $c(L)$ is represented locally by a real closed differential form 
$\omega= i \sum g_{\nu\o{\mu}} dz_\nu\wedge d\o{z_{\mu}}$ such that $(g_{\nu\o{\mu}})$
is positive (negative) definite. 
This notion has been generalized by Grauert (\cite{Gr1}, Definition 3.1)
to the following:

\begin{defn}
A holomorphic vector bundle $F$ over a complex space $Z$ is called weakly negative
if there exists a strongly pseudoconvex neighborhood $U(\mathfrak{0})$ of the zero section
$\mathfrak{0}$ in $F$. It is called weakly positive if its dual $F^*$ is weakly negative.
\end{defn}

A holomorphic line bundle over a complex manifold is negative exactly if it is weakly negative,
and that is the case exactly if its zero section is an exceptional set in the sense of Grauert.\\

But, there are weakly negative vector bundles
over compact complex manifolds that are not negative in the sense of Kodaira. 
Our basic intention is to use results of the following kind (\cite{Gr1}, Satz 4.6):

\begin{thm}\label{thm:embedding}
Let $M$ be a complex manifold and $X\subset M$ a compact complex submanifold
with weakly negative normal bundle $N$. Then there is a neighborhood $U(\mathfrak{0})$
of the zero section $\mathfrak{0}$ in $N$ and a biholomorphic mapping 
$\Psi: U(\mathfrak{0}) \rightarrow M$ such that $\Psi(\mathfrak{0})=X$.
\end{thm}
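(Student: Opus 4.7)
My plan is to follow Grauert's original approach, breaking the problem into a formal step and a convergence step, with weak negativity of $N$ used crucially in both.

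For the formal step, let $\mathcal{I}\subset\OO_M$ be the ideal sheaf of $X$ and $\mathcal{J}\subset\OO_N$ the ideal sheaf of the zero section $\mathfrak{0}$. The associated gradeds $\bigoplus_{k\geq 0}\mathcal{I}^k/\mathcal{I}^{k+1}$ and $\bigoplus_{k\geq 0}\mathcal{J}^k/\mathcal{J}^{k+1}$ are both canonically isomorphic to the symmetric algebra $\bigoplus_{k\geq 0}\mathrm{Sym}^k N^*$ on the conormal bundle. I would build an isomorphism between the $k$-th infinitesimal thickenings $(X,\OO_M/\mathcal{I}^{k+1})$ and $(X,\OO_N/\mathcal{J}^{k+1})$ inductively in $k$, starting from the tautological identification at order one. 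The obstruction to lifting an order-$k$ isomorphism to order $k+1$ lives in a cohomology group of the shape $H^1(X, N\otimes\mathrm{Sym}^{k+1}N^*)$. Since $N$ is weakly negative, $N^*$ is weakly positive, and Grauert's vanishing theorem for cohomology with values in weakly positive bundles kills these groups for all sufficiently large $k$; the finitely many low-order obstructions can be absorbed by adjusting the previously chosen transverse coordinates. This yields a formal isomorphism of the completions $\widehat{\OO}_{M,X}\cong \widehat{\OO}_{N,\mathfrak{0}}$.

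For the convergence step, I would exploit the strongly pseudoconvex neighborhood $V\subset N$ of $\mathfrak{0}$ supplied by the definition of weak negativity. Cover $X$ by finitely many coordinate charts on $M$ with transverse coordinates $(z_\alpha,w_\alpha)$ in which $X$ is the locus $\{w_\alpha=0\}$, and use the formal series to define, order by order, a local holomorphic map into $N$ on each chart. The task is then to show that these series converge uniformly on a common open neighborhood of $X$ and glue to a genuine holomorphic map $\Psi$; this rests on Cauchy estimates controlled by the strictly plurisubharmonic exhaustion of $V$. An equivalent route is to take Remmert reductions on both sides, obtaining two normal Stein germs of isolated singularities, match them using the formal data together with Hartogs-type extension across the singular point, and pull back to recover a biholomorphism $\Psi:U(\mathfrak{0})\to M$ satisfying $\Psi(\mathfrak{0})=X$. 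The implicit function theorem on $X$ guarantees that, after possibly shrinking $U(\mathfrak{0})$, the map is a biholomorphism onto its image.

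The principal obstacle is this convergence phase. The cohomological input produces the formal isomorphism almost mechanically, but upgrading a formal power series identification to an actual holomorphic map on an honest neighborhood requires uniform control of the correction terms introduced at each inductive step. This is precisely where weak negativity is used in its second incarnation: beyond guaranteeing the vanishing $H^1(X,N\otimes\mathrm{Sym}^{k+1}N^*)=0$ that makes the formal construction go through, it furnishes the strongly pseudoconvex domain on which the resulting series actually converge and the Remmert reduction on which a rigidity argument can be carried out.
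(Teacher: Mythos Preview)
The paper does not give its own proof of this statement: it is quoted verbatim as Grauert's result (\cite{Gr1}, Satz~4.6), with the one-line remark that ``this is an application of Grauert's formal principle which has been generalized to much more general settings.'' So there is nothing to compare against beyond the citation; your sketch is, in spirit, an outline of Grauert's original argument rather than an alternative to anything the paper does.

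That said, one step in your outline is not right as written. You say the finitely many low-order obstructions ``can be absorbed by adjusting the previously chosen transverse coordinates.'' This is exactly what an obstruction class in $H^1$ prevents: if the class is nonzero, no choice of coordinates will kill it. Grauert's argument does not proceed by waving away finitely many bad degrees; rather, one first arranges the formal identification to a high enough order using only linear-algebraic data (the identification of associated gradeds), and then the weak-negativity vanishing $H^1(X,\mathcal{F}\otimes (N^*)^{k})=0$ for $k\gg 0$ takes over and propagates the isomorphism through all higher orders. In the line-bundle case relevant to the paper (where $X$ is a hypersurface), the obstruction groups are $H^1(X, N^{1-k})$, and the point is that these vanish for all $k$ large; the finitely many remaining ones are handled not by ``absorption'' but by the specific structure of the comparison between $M$ and the total space of $N$ at low order. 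Your convergence discussion is reasonable in outline, though again this is where most of the genuine work in \cite{Gr1} lies, and your Remmert-reduction alternative would need the additional hypothesis that $X$ is actually exceptional in $M$ (which is how the theorem is applied in the paper, but is not assumed in the statement).
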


This is an application of Grauert's formal principle which has been generalized to much more general
settings. It is known that the normal bundle of the exceptional set $X=\bigcup X_j$ is weakly negative
(cf. \cite{HiRo}, Lemma 10), but how about the normal bundle of one irreducible component $X_j$?
We can give a simple answer if $M$ is a complex surface and the $X_j$ are regular complex curves:

\begin{lem}\label{lem:embedding}
Let $Y$ be an analytic subset of pure dimension $2$ of an open subset in $\C^n$
with one isolated singularity.
If $X\subset M$ is an irreducible component of the exceptional set of a desingularization $\pi: M\rightarrow Y$,
then the normal bundle $N$ of $X$ in $M$ is negative.
Hence, there exists a neighborhood $U(\mathfrak{0})$ of the zero section $\mathfrak{0}$ in $N$
and a biholomorphic mapping $\Psi: U(\mathfrak{0}) \rightarrow M$ such that $\Psi(\mathfrak{0})=X$.
\end{lem}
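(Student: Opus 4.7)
The plan is to reduce the lemma to the classical fact that the intersection matrix of the exceptional divisor of a resolved isolated surface singularity is negative definite, which is proved in Grauert's paper \cite{Gr1}. Since $d=\dim Y=2$, the resolution $M$ is a complex surface and the exceptional set $X=\pi^{-1}(0)$ has pure dimension $1$. By the hypothesis of the introduction, $X=\bigcup X_j$ is a divisor with only normal crossings, so every irreducible component $X_j$ is a compact Riemann surface smoothly embedded in $M$. In particular, the normal bundle $N$ of the chosen component $X$ in $M$ is a holomorphic line bundle over a compact Riemann surface, and for such a line bundle, negativity in the sense of Kodaira is equivalent to having strictly negative degree.

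The key step is to show $\deg N<0$. I would argue that $N\cong\mathcal{O}_M(X)|_X$ (the standard identification for a smooth hypersurface), so $\deg N$ equals the self-intersection number $X\cdot X$ computed in $M$. Since $X$ is an irreducible component of the exceptional divisor of the contraction $\pi\colon M\to Y$ of an isolated point, Grauert's contractibility criterion (Satz~4.9 of \cite{Gr1}, i.e.\ the theorem asserting that a compact analytic subset of a surface is exceptional if and only if its intersection matrix is negative definite) applies to $\bigcup X_j$. Negative definiteness of the full intersection matrix forces each diagonal entry $X_j\cdot X_j$ to be strictly negative; in particular $X\cdot X<0$, hence $\deg N<0$.

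A line bundle of negative degree on a compact Riemann surface admits a hermitian metric $h$ whose Chern curvature is a negative multiple of a Kähler form on $X$. Then the function $\rho(v):=h(v,v)$ on the total space of $N$ is smooth, non-negative, vanishes exactly on the zero section, and is strictly plurisubharmonic in a neighbourhood of $\mathfrak{0}$; this is the standard computation that the sublevel set $\{\rho<\varepsilon\}$ is a strongly pseudoconvex neighbourhood of $\mathfrak{0}$ in $N$ for $\varepsilon>0$ small enough. By the very definition above, this shows that $N$ is weakly negative (and in fact negative in the sense of Kodaira). Applying Theorem~\ref{thm:embedding} to $N$ produces the biholomorphism $\Psi\colon U(\mathfrak{0})\to M$ with $\Psi(\mathfrak{0})=X$, which is exactly the conclusion of the lemma.

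The main obstacle, conceptually, is the negativity of the self-intersection; everything else is book-keeping. This obstacle is overcome by appealing directly to Grauert's criterion in \cite{Gr1}: the full divisor $\bigcup X_j$ is contracted to a point, so it is exceptional in the sense of Grauert, and the criterion yields negative definiteness of the intersection matrix, from which $X\cdot X<0$ follows immediately. One should emphasise that this argument relies crucially on $d=2$: in higher dimensions, the self-intersection number of a codimension-one submanifold need not be a single integer, and irreducible components of the exceptional set can fail to have negative normal bundle even when the whole exceptional set has.
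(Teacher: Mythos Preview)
Your proof is correct and takes a genuinely different route from the paper's. The paper argues by cases: if the exceptional set has a single irreducible component $X$, it quotes \cite{HiRo}, Lemma~10, to conclude that the normal bundle of $X$ is (weakly, hence) negative; if there are several components, it pulls back a coordinate function $f=z_\nu\circ\pi$, which vanishes to some order $k$ along $X$ and thus defines a nontrivial holomorphic section of $(N^*)^k$, and then observes that this section vanishes at the crossing points $X\cap\overline{f^{-1}(0)\setminus X}\neq\emptyset$. Grauert's elementary criterion (a line bundle over an irreducible compact curve is positive as soon as some tensor power has a nontrivial section with a zero; \cite{Gr1}, proof of Satz~3.4) then gives positivity of $N^*$. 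Your argument instead invokes, once and uniformly, the negative definiteness of the intersection matrix of the full exceptional divisor (Grauert's contractibility criterion in \cite{Gr1}) to get $X\cdot X=\deg N<0$, and then the standard fact that negative degree on a compact Riemann surface means negative in the sense of Kodaira. Your route is cleaner and avoids the case split; the paper's route is more hands-on in the multi-component case and uses only the lighter positivity criterion for line bundles on curves rather than the full statement on intersection matrices. Both ultimately rest on results from \cite{Gr1}, just different ones.
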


\begin{proof}
We assume that $Y$ has an isolated singularity at the origin $0\in\C^n$.
Let $A=\pi^{-1}(\{0\})$ be the exceptional set of the desingularization $\pi: M \rightarrow Y$.
If $A$ consist of only one irreducible component $X=A$, then we are done by the result
of Hironaka and Rossi (\cite{HiRo}, Lemma 10). We have to treat the case where $A$ consists
of more than one regular complex curves with only normal crossings.

So, let $X$ be an irreducible component of the exceptional set $A$ and $N\rightarrow X$
its normal bundle.
There exists an index $1\leq \nu \leq n$ such that $f:=z_\nu\circ \pi$ does not vanish
identically on $M$. $f$ is vanishing of some finite order $k$ along $X$. Hence, $f$ defines
a holomorphic section $s$ in the $k$-th power of the dual bundle $(N^*)^k$ (take $f/g_\alpha$), which is not
identically zero. Let
$$C:=\o{\{w\in M: f(w)=0\}\setminus X}.$$
Then $f$ is vanishing of higher order on $X\cap C$, which is not empty by assumption
(that there are more than one irreducible components of the exceptional set).
So, $s$ is a holomorphic section in $(N^*)^k$ that is not identically zero,
but vanishing in $X\cap C\neq \emptyset$. Now, we can use of the following
criterion given by Grauert in the proof of \cite{Gr1}, Satz 3.4:

\begin{lem}
Let $L$ be a holomorphic line bundle over an irreducible compact complex space of
dimension one. Assume that there is an integer $k\geq 1$ such that there exists
a holomorphic section $s$ in $L^k$ which is not vanishing identically but has at least
one zero. Then $L$ is positive.
\end{lem}

We conclude that $N^*$ is a positive line bundle. Hence, its dual bundle $N$ is negative.
So, it is also weakly negative and Theorem \ref{thm:embedding} gives the desired embedding
of a neighborhood of the zero section into $M$.
\end{proof}

\section{Weighted Cohomology of Holomorphic Line Bundles}

Let $X$ be a complex manifold and $E \rightarrow X$ a negative holomorphic line bundle.
We denote by $\OO_E$ the structure sheaf of $E$ and by 
$\mathcal{I}$ the ideal sheaf of the zero section in $E$.
Then $\mathcal{I}^k\OO_E$ is a subsheaf of the sheaf of germs of
meromorphic functions $\mathcal{M}_E$ for each $k\in \Z$.
Let us repeat the construction of a fine resolution for the sheaves $\mathcal{I}^k\OO_E$.
We define for $1\leq p\leq\infty$
the presheaves
$$\mathcal{L}_{0,q}^p(U):=\{f\in L^{p,loc}_{0,q}(U): \dq f\in L^{p,loc}_{0,q+1}(U)\},\ \ U\subset E \mbox{ open},$$
which are already sheaves with the natural restriction maps.
In case $p=2$, we simplify the notation: $\mathcal{L}_{0,q}:=\mathcal{L}^2_{0,q}$.
For $k\in \Z$, we consider $\mathcal{I}^k \mathcal{L}^p_{0,q}$
as subsheaves of the sheaf of germs of differential forms with measurable coefficients.
Now, we define a weighted $\dq$-operator on $\mathcal{I}^k \mathcal{L}_{0,q}^p$.
Let $f \in (\mathcal{I}^k \mathcal{L}_{0,q}^p)_z$. Then $f$ can be written locally as $f = h^k f_0$,
where $h\in (\OO_E)_z$ generates $\mathcal{I}_z$ and $f_0\in (\mathcal{L}^p_{0,q})_z$. Let
$$\dq_k f := h^k \dq f_0 = h^k \dq ( h^{-k} f).$$
For $k\geq 0$, this operator is just the usual $\dq$-operator.
We obtain the sequence
$$0 \rightarrow \mathcal{I}^k\OO_E \rightarrow
\mathcal{I}^k \mathcal{L}^p_{0,0} \xrightarrow{\ \dq_k\ }
\mathcal{I}^k \mathcal{L}^p_{0,1} \xrightarrow{\ \dq_k\ }
\cdots \xrightarrow{\ \dq_k\ }
\mathcal{I}^k \mathcal{L}^p_{0,d} \rightarrow 0 ,$$
which is exact by the Grothendieck-Dolbeault Lemma and well-known regularity results ($d=\dim E=\dim X+1$).
It is a fine resolution of $\mathcal{I}^k \OO_E$ since the $\mathcal{I}^k \mathcal{L}_{0,q}^p$
are closed under multiplication by smooth cut-off functions.
Our next purpose is to compute
$$H^q(E,\mathcal{I}^k\OO_E) \cong \frac{\mbox{ker }
(\dq_k: \mathcal{I}^k\mathcal{L}^p_{0,q}(E) \rightarrow \mathcal{I}^k\mathcal{L}^p_{0,q+1}(E))}
{\mbox{Im }
(\dq_k: \mathcal{I}^k\mathcal{L}^p_{0,q-1}(E) \rightarrow \mathcal{I}^k\mathcal{L}^p_{0,q}(E))}$$
in terms of some cohomology groups on $X$.\\

If $\mu$ is a positive integer, let $E^\mu$ be the $\mu$-fold tensor product of $E$,
and $E^{-\mu}$ the $\mu$-fold tensor product of the dual bundle $E^*$, $E^0$ the trivial line bundle.
We write $\OO(E^\mu)$ for the sheaf of germs of holomorphic sections in $E^\mu$.
Do not confuse $\OO(E^\mu)$ with the space of globally holomorphic functions on $E$, 
which is $\OO_E(E)=\Gamma(E,\OO_E)$ in our notation. We have the following expansion for cohomology classes:

\begin{thm}\label{thm:cohom}
Let $X$ be a compact complex manifold and $E\rightarrow X$ a negative holomorphic line bundle, and $k\in \Z$.
Let $D\subset \subset E$ be a strongly pseudoconvex smoothly bounded neighborhood of the zero section in $E$, or $D=E$.
Then there exist a natural isomorphism
\begin{eqnarray}\label{eq:cohom}
i_q: H^q(D,\mathcal{I}^k \OO_E) \rightarrow
\bigoplus_{\mu\geq k} H^q(X,\OO(E^{-\mu}))\ \ \mbox{ for all } q\geq 1.
\end{eqnarray}
\end{thm}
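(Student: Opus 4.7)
The plan is to exploit the natural fiberwise $S^1$-symmetry of the line bundle $E$ to Fourier-decompose cohomology classes into weighted components, and then to identify the weight-$\mu$ piece with $H^q(X,\OO(E^{-\mu}))$ by observing that the $\mu$-th power of the fiber coordinate transforms as a section of $E^{-\mu}$.

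First I would fix a cover $\{U_\alpha\}$ of $X$ over which $E$ trivializes with fiber coordinates $w_\alpha$ satisfying the transition law $w_\alpha = g_{\alpha\beta} w_\beta$, so that $w_\alpha^k$ is a local generator of $\mathcal{I}^k$. Both $D=E$ and a strongly pseudoconvex tubular neighborhood of the zero section (which may be chosen of the form $\{v: h(v,v) < c\}$ for a hermitian metric $h$ on $E$, strictly plurisubharmonic off the zero section by negativity of $E$) are invariant under the rotations $R_\theta(x,w_\alpha) = (x, e^{i\theta} w_\alpha)$. These rotations commute with $\dq_k$ and preserve every $\mathcal{I}^k \mathcal{L}_{0,q}$, so they induce a continuous $S^1$-action on $H^q(D,\mathcal{I}^k\OO_E)$. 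Via the Fourier projectors
$$[f] \mapsto [f]_\mu := \frac{1}{2\pi} \int_0^{2\pi} e^{-i\mu\theta}\, R_\theta^*[f]\, d\theta,$$
one obtains a decomposition $H^q(D,\mathcal{I}^k\OO_E) = \bigoplus_{\mu\in\Z} H^q(D,\mathcal{I}^k\OO_E)_\mu$ into weight eigenspaces.

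To define $i_q$ I would next show that a $\dq_k$-closed weight-$\mu$ form in $\mathcal{I}^k\mathcal{L}_{0,q}(D)$ vanishes unless $\mu\geq k$, and that modulo a $\dq_k$-exact term it can be written as $p^* \alpha_\mu \cdot w^\mu$, where $p: E\to X$ is the bundle projection and $\alpha_\mu$ is a Dolbeault $(0,q)$-form on $X$ with values in $E^{-\mu}$ (the local expressions patch via the transition law $w_\alpha^\mu = g_{\alpha\beta}^\mu w_\beta^\mu$, forcing the coefficient to transform as a section of $E^{-\mu}$). The reduction to this model form is a fiberwise $\dq$-problem, solvable because the fibers $p^{-1}(x)\cap D$ are Stein (either a disk or $\C$). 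Set $i_q([f]) := ([\alpha_\mu])_{\mu\geq k}$; a natural inverse is given by $([\alpha_\mu])_\mu \mapsto [\sum_\mu p^*\alpha_\mu \cdot w^\mu]$, the sum being finite because Grauert's vanishing theorem for positive line bundles yields $H^q(X,\OO(E^{-\mu})) = 0$ for $q\geq 1$ and $\mu$ sufficiently large (as $E^{-\mu} = (E^*)^\mu$ is positive). Injectivity and surjectivity then follow by checking the two assignments are mutually inverse on each weight component, and naturality is automatic from the intrinsic description.

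The main obstacle I anticipate is the fiberwise gauge-fixing step: showing that any $\dq_k$-closed weight-$\mu$ form differs from the model $p^*\alpha_\mu \cdot w^\mu$ by a $\dq_k$-exact term whose potential still lies in $\mathcal{I}^k\mathcal{L}_{0,q-1}(D)$, not merely in the class of meromorphic forms away from $X$. This demands an explicit fiber integration together with an $L^2_{\text{loc}}$ regularity argument to control the resulting potential along the zero section, which is especially delicate for $k<0$, where one must not allow the poles to exceed the prescribed order. Once this fiberwise $\dq$-lemma is in hand, the rest of the argument is essentially formal.
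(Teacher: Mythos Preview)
Your approach is correct in outline and shares the paper's core mechanism --- identifying the $\mu$-th fiber-power with an $E^{-\mu}$-valued form on $X$ via the transition law, and invoking Grauert's vanishing to make the right-hand side a finite sum --- but the organization via the $S^1$-action is genuinely different and, for injectivity, cleaner.

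\textbf{How the paper proceeds.} The paper does not use the $S^1$-symmetry. Working in the $C^\infty$ resolution, it first kills the $d\o{s}$-component of a representative $\omega$ by a fiberwise Cauchy integral $\eta_\alpha = s^k P(s^{-k}\omega_\alpha^{II}\wedge d\o{s})$ (this is precisely the ``fiberwise gauge-fixing'' you flag as the obstacle; the factor $s^k(\cdot)s^{-k}$ is what keeps the potential in $\mathcal{I}^k$ for $k<0$). The remaining form then has coefficients holomorphic in $s$, yielding an \emph{infinite} power-series expansion $\sum_{\mu\geq k} a_{\alpha,\mu}(z)\,s^\mu$ and hence the map $i_q$. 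Surjectivity is as you describe. For injectivity the paper cannot simply solve term by term (the series is infinite), so it imports an external result (\cite{FOV1}, Prop.~1.3): for some $\nu\geq k$ the natural map $H^q(D,\mathcal{I}^\nu\OO_E)\to H^q(D,\mathcal{I}^k\OO_E)$ is zero, which allows truncation of the series to a finite sum.

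\textbf{What your route buys, and what it needs.} Your Fourier decomposition sidesteps the truncation problem: since $\mathcal{I}^k\OO_E$ is locally free and $D$ is $1$-convex, $H^q(D,\mathcal{I}^k\OO_E)$ is finite-dimensional for $q\geq 1$, so the $S^1$-representation has only finitely many nonzero weight spaces and injectivity follows weight-by-weight from the gauge-fixing alone --- no appeal to \cite{FOV1}. You should make this finite-dimensionality explicit; without it the step ``$\bigoplus_\mu$ rather than $\prod_\mu$'' is unjustified. The price you pay is that your argument requires $D$ to be $S^1$-invariant, whereas the theorem is stated for an \emph{arbitrary} strongly pseudoconvex smoothly bounded $D$. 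You must therefore add the reduction: any two such $D$ have isomorphic $H^q(\,\cdot\,,\mathcal{I}^k\OO_E)$ for $q\geq 1$ because one is a $(d-1)$-convex extension of the other in the sense of Henkin--Leiterer (the same device the paper uses to pass from $D=E$ to relatively compact $D$). With that reduction and the finite-dimensionality made explicit, your proof goes through.
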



Before we prove Theorem \ref{thm:cohom}, it is convenient to introduce another point of view.
Since a sheaf $\mathcal{I}^k\OO_E$ is locally free of rank $1$ (invertible), there is a corresponding holomorphic line bundle $B^k \rightarrow E$
(defined by quotients of generators of $\mathcal{I}^k$ as transition functions) such that
\begin{eqnarray*}
H^q(U, \mathcal{I}^k\OO_E) \cong H^q(U, B^k)
\end{eqnarray*}
for all $q\geq 0$, and $U\subset E$ open. Note that $B^0\cong E\times \C$ is the trivial bundle, and 
$$B^k|_X \cong E^{-k}$$
for all $k\in \Z$, because of the transition functions. Now, we are in the position to apply classical results
about holomorphic functions with values in holomorphic vector bundles in the proof of Theorem \ref{thm:cohom}:

\begin{proof}
Let $d=\dim E=\dim X+1$. We only need to consider $1\leq q\leq d-1$,
because both sides in \eqref{eq:cohom} vanish if $q>d$. If $q=d$,
the right hand side in \eqref{eq:cohom} vanishes for dimensional reasons,
whereas the left hand side vanishes by a result of Siu (see the main statement in \cite{Siu}),
because $E$ does not contain $d$-dimensional compact connected components.\\

Moreover, we can reduce the case $D=E$ to the situation where $D\subset\subset E$ is a strongly pseudoconvex neighborhood of the zero section,
because blowing down the zero section of $E$ to a point gives an affine algebraic space (see \cite{Gr1}, section 3, Satz 5).
We can assume that this point is $0\in \C^n$. Then, the pull-back of $\|\cdot\|^2$ defines an exhaustion function $\rho$ of $E$
which is strictly plurisubharmonic outside the zero section, and has strongly pseudoconvex level sets.
For some $\epsilon>0$, let $D=\{z\in E: \rho(z)<\epsilon\}$.
Then $E$ is a $(d-1)$-convex extension of $D$ in the sense of Henkin and Leiterer (\cite{HeLe2}, Definition 12.1),
where $d=\dim E$. Hence the restriction map
\begin{eqnarray*}
H^q(E,B^k) \rightarrow H^q(D,B^k)
\end{eqnarray*}
is an isomorphism for all $1\leq q \leq d$ and $k\in\Z$.
Hence
\begin{eqnarray*}
H^q(E, \mathcal{I}^k\OO_E) \cong H^q(E, B^k) \cong H^q(D,B^k) \cong H^q(D,\mathcal{I}^k\OO),
\end{eqnarray*}
so that we only have to consider the case that $D\subset\subset E$ is strongly pseudoconvex.\\

We will compute the cohomology groups directly as Dolbeault cohomology groups.
Let $\mathcal{C}^\infty_{0,q}$ be the sheaf of germs of smooth $(0,q)$-forms.
For
$$0 \rightarrow \mathcal{I}^k\OO_E \xrightarrow{\ \iota\ }
\mathcal{I}^k \mathcal{C}^\infty_{0,0} \xrightarrow{\ \dq_k\ }
\mathcal{I}^k \mathcal{C}^\infty_{0,1} \xrightarrow{\ \dq_k\ }
\cdots \xrightarrow{\ \dq_k\ }
\mathcal{I}^k \mathcal{C}^\infty_{0,d} \rightarrow 0$$
is also a fine resolution of $\mathcal{I}^k\OO_E$,
it is enough to work in the $C^\infty$-category.\\

So, let the class $[\omega]\in H^q(D,\mathcal{I}^k \OO_E)$ be represented by
the $\dq_k$-closed $(0,q)$-form $\omega\in \mathcal{I}^k \mathcal{C}^\infty_{0,q}(D)$.
Because $D$ is strongly pseudoconvex, we can assume that $\omega$ extends $\dq_k$-closed to a neighborhood of $D$. So, $\omega\in \mathcal{I}^k\mathcal{C}^\infty_{0,q}(\o{D})$.

Let $\{U_\alpha\}_{\alpha\in\mathcal{A}}$ be an open covering of $X$ such that $E|_{U_\alpha}$ is trivial.
Then $\omega$ is represented locally by $\{\omega_\alpha\}_{\alpha\in\mathcal{A}}$.
On $E|_{U_\alpha}\cong U_\alpha \times \C$ we have local coordinates $z_1, ..., z_{d-1}, s$,
and there is a unique decomposition 
$$\omega_\alpha = \omega_\alpha^I + \omega_\alpha^{II}\wedge d\o{s},$$
such that $\omega_\alpha^I$ does not contain $d\o{s}$.
If $U_\alpha\cap U_\beta\neq \emptyset$, then $z_1', ..., z_{d-1}', t$ are local coordinates on $E|_{U_\beta}$
with
$$ s = g_{\alpha\beta}(z) t\ ,\ \ g_{\alpha\beta}\in \Gamma(U_\alpha\cap U_\beta,\OO^*).$$
We have $\omega_\alpha^I=\omega_\beta^I$ and  $\omega_\alpha^{II}\wedge d\o{s}=\omega_\beta^{II}\wedge d\o{t}$.
This leads to a global decomposition $\omega=\omega^I + \omega^{II}$
where $\omega^I$ is given by the $\{\omega_\alpha^I\}$ and $\omega^{II}$ is given
by the $\{\omega_\alpha^{II}\wedge d\o{s}\}$. First of all,
we will show that we may assume $\omega^{II}=0$.
For this, we integrate along the fibers of $D$. So, if $(z,a)\in E|_{U_\alpha}\cong U_\alpha\times \C$,
let
$$P(f d\o{s}) (z,a) := \frac{1}{2\pi i} \int_{D\cap E_z} f(z,s) \frac{d\o{s}\wedge ds}{s-a}.$$
Then, for
$$u = f(z,s)\ d\o{z_{l_1}}\wedge...\wedge d\o{z_{l_{q-1}}}\wedge d\o{s}$$
we set
$$P u := P \big(f(z,s)d\o{s}\big)\ d\o{z_{l_1}}\wedge...\wedge d\o{z_{l_{q-1}}},$$
where $P$ operates in $s$ on $f(z,s)d\o{s}$. 
Because the boundary of $D$ is smooth,
$P \big(f(z,s)d\o{s}\big)$ is also smooth.
The operator ${\bf P}$ extends linearly to arbitrary forms.
Since $\omega_\alpha^{II}\wedge d\o{s}=\omega_\beta^{II}\wedge d\o{t}$ on $U_\alpha\cap U_\beta\neq \emptyset$,
the locally defined  forms 
$$\{\eta_\alpha=s^k P(s^{-k} \omega_\alpha^{II}\wedge d\o{s})\}$$
define a global $(0,q-1)$-form $\eta={\bf P}\omega \in \mathcal{I}^k \mathcal{C}^\infty_{0,q}(E)$.
Note that the $\eta_\alpha$ do not contain $d\o{s}$ 
and that
$$s^k \frac{\partial}{\partial\o{s}} \left( s^{-k} \eta_\alpha\right)= \omega_\alpha^{II}.$$

Now, $\wt{\omega}:=\omega+(-1)^q \dq_k \eta$ lies in the same $\dq_k$-cohomology class as $\omega$,
but in local coordinates, it does not contain $d\o{s}$.
So, from now on, we will assume that $\omega$ is represented by $\{\omega_\alpha+\omega_\alpha^{II}\wedge d\o{s}\}$
where all the $\omega_\alpha^{II}=0$.
Due to linearity, it is enough to consider
$$\omega_\alpha(z,s) = f(z,s)\ d\o{z_1}\wedge ... \wedge d\o{z_q}.$$
Since $\dq_k \omega=0$ and we have eliminated the $d\o{s}$-parts,
we have
$$s^k\frac{\partial }{\partial \o{s}} \left(s^{-k} f\right)=0,$$
which means that the coefficients $s^{-k} f$ are holomorphic in $s$.
This implies the local expansion
\begin{eqnarray}\label{eq:expansion}
\omega_\alpha(z,s) = \sum_{\mu\geq k} a_{\alpha,\mu}(z)\ s^\mu,
\end{eqnarray}
where the $a_{\alpha,\mu}$ are smooth $\dq$-closed $(0,q)$-forms on $U_\alpha\subset X$.

Now, since $s^\mu = g_{\alpha\beta}(z)^\mu\ t^\mu$ on $U_\alpha\cap U_\beta\neq\emptyset$,
we deduce the transformation rule
$$a_{\alpha,\mu} (z) = g_{\alpha\beta}(z)^{-\mu} a_{\beta,\mu}(z).$$
This means that the coefficients $\{a_{\alpha,\mu}\}$ are nothing else but
globally defined $(0,q)$-forms $a_\mu$ on $X$ with coefficients  in $E^{-\mu}$ (for $\mu\geq k$),
and that $\omega$ is $\dq_k$-closed exactly if all the $a_\mu$, $\mu\geq k$, are $\dq$-closed.
It is easy to see that a different representing form for $[\omega]$ defines the same classes
$[a_\mu] \in H^q(X,\OO(E^{-\mu}))$: If $\eta\in \mathcal{I}^k \mathcal{C}^\infty_{0,q-1}(D)$
with $\dq_k \eta=\omega$,
then the coefficients of $s^{-k} \eta_\alpha$ are holomorphic in $s$ which implies the local expansion
$$\eta_\alpha(z)=\sum_{\mu\geq k} c_{\alpha,\mu}(z)\ s^\mu$$
with $\dq c_{\alpha,\mu} = a_{\alpha,\mu}$.
We have thus defined a homomorphism
$$i_q: H^q(D,\mathcal{I}^k \OO_E) \rightarrow
\prod_{\mu\geq k} H^q(X,\OO(E^{-\mu})).$$
The fact that the $a_{\alpha,\mu}$ transform inversely to the linear coordinates of the line bundle
implies that this construction is independent of the trivialization.
Now, we will use the fact that
$E$ is a negative holomorphic line bundle over the compact complex manifold $X$.
So, for any coherent analytic
sheaf $\mathcal{S}$ over $X$, there exists an integer $\mu_0\geq 1$,
such that
\begin{eqnarray}\label{eq:grauert1}
H^q(X,\mathcal{S} \otimes \OO(E^{-\mu}))=0
\end{eqnarray}
for all $q\geq 1$, $\mu\geq \mu_0$ (cf. \cite{Gr1}). Taking simply $\mathcal{S}=\OO_X$,
it follows that
$$\prod_{\mu\geq k} H^q(X,\OO(E^{-\mu})) = \bigoplus_{\mu\geq k} H^q(X,\OO(E^{-\mu}))$$
is finite-dimensional, and it is clear that $i_q$ is surjective. For, if 
$$[\theta] \in \bigoplus_{\mu\geq k} H^q(X,\OO(E^{-\mu})),$$
then there exists $j_0\geq k$ such that $[\theta]$ is represented locally by
$$\theta=(\{\theta_{\alpha,k}\}_{\alpha\in\mathcal{A}},\{\theta_{\alpha,k+1}\}_{\alpha\in\mathcal{A}},...),$$
where $\theta_{\alpha,j}\equiv 0$ for all $j> j_0\geq k$, $\alpha\in\mathcal{A}$.
But then a form $\omega\in \mathcal{I}^k\mathcal{C}^\infty_{0,q}(E)$ is well-defined by the the finite sum
$$\omega_\alpha(z,s) = \sum_{\mu= k}^{j_0} a_{\alpha,\mu}(z)\ s^\mu$$
such that $\dq_k \omega=0$ and $i_q([\omega])=[\theta]$ showing that $i_q$ is onto.
It remains to prove that $i_q$ is also injective which is a bit more complicated.
Let $Z$ be the Remmert reduction of $E$, $\pi: E\rightarrow Z$ the projection.
Then $\pi: E\rightarrow Z$ is a desingularization of $Z$ with exceptional set $X$,
and $D$ is the pre-image of a Stein domain in $Z$.
So, we can use the results of Forn{\ae}ss, {\O}vrelid and Vassiliadou in \cite{FOV1}.

Namely, by \cite{FOV1}, Proposition 1.3, there exists an index $\nu\geq k$
such that the natural map
\begin{eqnarray}\label{eq:FOV1}
j_*: H^q(D,\mathcal{I}^\nu \OO_E) \rightarrow H^q(D,\mathcal{I}^k \OO_E)
\end{eqnarray}
induced by the inclusion $j: \mathcal{I}^\nu \OO_E \rightarrow \mathcal{I}^k \OO_E$ is the zero map.
So, for a class $[\omega]\in H^q(D,\mathcal{I}^k\OO_E)$ as in the beginning, let us return to the expansion
\eqref{eq:expansion}.
We simply cut off the first $\nu-k$ summands.
Namely, let $\wt{\omega} \in \mathcal{I}^k C^\infty_{0,q}(E)$ be given locally by
\begin{eqnarray}\label{eq:expansion2}
\wt{\omega}_\alpha(z,s) = \sum_{\mu =  k}^{\nu-1} a_{\alpha,\mu}(z)\ s^\mu.
\end{eqnarray}
Then $\wt{\omega}$ is $\dq_k$-closed, and $\omega-\wt{\omega}$ defines a class in $H^q(D,\mathcal{I}^\nu \OO_E)$.
By \eqref{eq:FOV1}, this implies that $[\omega-\wt{\omega}]=0$ in $H^q(D,\mathcal{I}^k\OO_E)$, and that
$[\omega]=[\wt{\omega}] \in H^q(D,\mathcal{I}^k\OO_E)$.
Now, because \eqref{eq:expansion2} is finite, it follows that $\wt{\omega}$ is $\dq_k$-exact exactly if the coefficients
$a_\mu$ are exact for all $k\leq\mu\leq \nu-1$, which means that $i_q$ is injective.
\end{proof}

We will conclude this section by a few interesting
consequences in the given situation.
Theorem \ref{thm:cohom} leads to the observation that there exists $\mu_0\geq 1$ such that
$$H^q(E^{\mu_0},\mathcal{I}^k\OO_{E^{\mu_0}}) 
\cong \bigoplus_{\mu\geq k} H^q(X,\OO(E^{-\mu_0\mu})) = 0\ \ \mbox{for all } q\geq 1,$$
provided $k$ is positive. In case $k\leq 0$, the situation is not that easy.
But we are in the position to use Serre duality.
Let $\Omega^{d-1}$ be the canonical sheaf on $X$, where $d=\dim X+1$.
Then:
\begin{eqnarray}\label{eq:grauert2}
H^q(X,\OO(E^\mu)) \cong H^{(d-1)-q}(X,\Omega^{d-1}\otimes\OO(E^{-\mu}))
\end{eqnarray}
for all $0\leq q\leq d-1$.
Combining this duality with Grauert's Vanishing Theorem \eqref{eq:grauert1}
for $\mathcal{S}=\Omega^{d-1}$ on the right hand side of \eqref{eq:grauert2},
it follows that there exists an integer
$\mu_1\geq 1$ such that
$$H^q(X,\OO(E^\mu))=0 \ \ \mbox{ for all } 0\leq q\leq d-1, \mu\geq \mu_1.$$
Taking $\nu_0=\max\{\mu_0,\mu_1\}$, Theorem \ref{thm:cohom} implies:

\begin{thm}\label{thm:cohom2}
Let $X$ be a compact complex manifold, and let $E\rightarrow X$ be
a negative holomorphic line bundle. Then there exists an integer $\nu_0\geq 1$,
such that the following is true for all $\nu\geq\nu_0$: Let $D=E^\nu$ or $D\subset\subset E^\nu$
a smoothly bounded strongly pseudoconvex neighborhood of the zero section in $E^\nu$.
Then  $H^q(D,\OO_{E^\nu})\cong H^q(X,\OO_X)$ for all $q\geq 1$.
If $k$ is a positive integer, then
$$H^q(D,\mathcal{I}^k \OO_{E^\nu})=0 \ \ \ \mbox{ for all }\  q\geq 1.$$
If $k<0$, then
$$H^q(D,\mathcal{I}^k \OO_{E^\nu})\cong H^q(X,\OO_X) \ \ \ \mbox{ for all }\ 1\leq q\leq \dim X-1.$$
\end{thm}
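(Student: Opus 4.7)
The plan is to apply Theorem \ref{thm:cohom} directly with $E^\nu$ in place of $E$. Since $E^\nu$ is again a negative holomorphic line bundle over the compact manifold $X$, Theorem \ref{thm:cohom} gives the identification
$$H^q(D,\mathcal{I}^k\OO_{E^\nu}) \cong \bigoplus_{\mu\geq k} H^q\bigl(X,\OO(E^{-\nu\mu})\bigr)$$
for all $q\geq 1$, whether $D=E^\nu$ or $D\subset\subset E^\nu$ is a smoothly bounded strongly pseudoconvex neighborhood of the zero section. The whole argument then reduces to choosing $\nu_0$ large enough that each summand on the right is either zero or manifestly isomorphic to the group in the statement.

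To determine $\nu_0$, I would first invoke Grauert's vanishing theorem \eqref{eq:grauert1} with $\mathcal{S}=\OO_X$ to obtain $\mu_0\geq 1$ such that $H^q(X,\OO(E^{-\mu}))=0$ for all $q\geq 1$ and all $\mu\geq\mu_0$. Next, combining Serre duality \eqref{eq:grauert2} with Grauert's vanishing theorem applied to $\mathcal{S}=\Omega^{d-1}$ would produce a second integer $\mu_1\geq 1$ such that $H^q(X,\OO(E^{\mu}))=0$ for all $1\leq q\leq \dim X-1$ and all $\mu\geq \mu_1$; here the Serre duality degree shift $d-1-q$ is what forces the restriction $q\leq \dim X-1$, because only for such $q$ is $d-1-q\geq 1$ and Grauert's vanishing result applicable to the dual cohomology. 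I would then set $\nu_0:=\max(\mu_0,\mu_1)$.

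For any $\nu\geq \nu_0$ the three cases in the statement follow by bookkeeping. If $k$ is positive, every summand $H^q(X,\OO(E^{-\nu\mu}))$ with $\mu\geq k\geq 1$ has exponent $\nu\mu\geq\nu\geq\mu_0$, so all terms vanish for $q\geq 1$. For the ordinary structure sheaf (i.e.\ $k=0$), the $\mu=0$ summand contributes precisely $H^q(X,\OO_X)$ while the terms with $\mu\geq 1$ vanish as before. If $k<0$, the sum decomposes into the finitely many terms with $k\leq\mu\leq -1$, corresponding to positive tensor powers $E^{\nu|\mu|}$ with $\nu|\mu|\geq\nu\geq\mu_1$ and hence killed by the Serre/Grauert argument in the range $1\leq q\leq \dim X-1$; the single $\mu=0$ term giving $H^q(X,\OO_X)$; and the terms $\mu\geq 1$, which vanish as before.

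I expect the main obstacle to be administrative rather than conceptual: one has to separate the summands in the Theorem \ref{thm:cohom} expansion into those corresponding to negative powers of $E^\nu$ (handled by Grauert vanishing directly) and those corresponding to positive powers (handled by Serre duality), and to track that this duality shifts the degree, restricting the conclusion in the case $k<0$ to the range $1\leq q\leq \dim X-1$. Beyond Theorem \ref{thm:cohom}, Grauert vanishing, and Serre duality, no substantively new ingredient is required.
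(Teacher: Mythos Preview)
Your proposal is correct and follows essentially the same route as the paper: apply Theorem \ref{thm:cohom} to $E^\nu$, use Grauert vanishing \eqref{eq:grauert1} with $\mathcal{S}=\OO_X$ to get $\mu_0$, use Serre duality \eqref{eq:grauert2} combined with Grauert vanishing for $\mathcal{S}=\Omega^{d-1}$ to get $\mu_1$, and set $\nu_0=\max(\mu_0,\mu_1)$. Your tracking of the degree restriction $1\leq q\leq \dim X-1$ in the $k<0$ case is in fact cleaner than the paper's brief discussion preceding the theorem.
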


\newpage
\section{A Sufficient Condition for $L^2$-Solvability \\ (Theorem \ref{thm:l2})}

This section is dedicated to the proof of Theorem \ref{thm:l2}.
So, let $Y$ be a pure dimensional analytic set in $\C^n$
with an isolated singularity at the origin
such that the exceptional set $X$ of the desingularization
$\pi: M\rightarrow Y$ is regular. Furthermore, let $\Omega\subset\subset Y$ be a Stein domain
with $0\in \Omega$, $\Omega^*=\Omega\setminus \{0\}$ and $\Omega'=\pi^{-1}(\Omega)$.
Recall that $Y^*=Y\setminus\{0\}$ carries the induced hermitian metric 
of the embedding $\iota: Y^*\hookrightarrow\C^n$, and give $M$ an arbitrary fixed hermitian metric.\\

Let $z\in X$. Then, by Lemma \ref{lem:detjac},  
there exists a neighborhood $U_z$ in $M$ and a holomorphic function
$J_z\in\OO(U_z)$ such that $|J_z(w)|^2=\det\Jac \pi(w)$ for all $w\in U_z\setminus X$.
It follows that all such representing holomorphic functions vanish of a fixed order $k_0\geq 1$
along the irreducible analytic set $X$.
This is the index from the statement of Theorem \ref{thm:l2}.
So, we assume that
\begin{eqnarray}\label{eq:assumption}
H^1(X,\OO(N^{-\mu}))=0
\end{eqnarray}
for all $\mu\geq -k_0$.
Let $\mathcal{I}$ be the ideal sheaf of $X$ in $M$. Recall the fine resolution
\begin{eqnarray}\label{eq:resolution}
0 \rightarrow \mathcal{I}^k\OO_M \rightarrow
\mathcal{I}^k \mathcal{L}_{0,0} \xrightarrow{\ \dq_k\ }
\mathcal{I}^k \mathcal{L}_{0,1} \xrightarrow{\ \dq_k\ }
\cdots \xrightarrow{\ \dq_k\ }
\mathcal{I}^k \mathcal{L}_{0,d} \rightarrow 0.
\end{eqnarray}
We are now looking for a neighborhood $V\subset \Omega'$ of $X$
such that 
$$H^1(V,\mathcal{I}^{-k_0}\OO_M) \cong \bigoplus_{\mu\geq -k_0} H^1(X,\OO(N^{-\mu})).$$
So, let $N\rightarrow X$ be the normal bundle of $X$ in $M$.
Then, by Theorem \ref{thm:embedding},
we know that there is a neighborhood $U(\mathfrak{0})$ of the zero section in $N$
and a biholomorphism $\Psi: U(\mathfrak{0})\rightarrow M$ such that $\Psi(\mathfrak{0})=X$.

But $\rho:=\|\cdot\|^2\circ \pi$ is an exhaustion function of $M$ which is strictly plurisubharmonic
outside $X$ and vanishes exactly on $X$.
So, there exists $\epsilon>0$ such that
$$V:=\{z\in M: \rho(z)<\epsilon\} \subset\subset \Psi(U(\mathfrak{0}))\cap \Omega'$$
is strongly pseudoconvex. Hence,
$$D:=\Psi^{-1}(V)\subset\subset U(\mathfrak{0})$$
is a strongly pseudoconvex neighborhood  of the
zero section in $N$. Since $\Psi$ is a biholomorphism,
Theorem \ref{thm:cohom} leads to:
\begin{eqnarray}\label{eq:vanishing}
H^1(V,\mathcal{I}^{-k_0}\OO_M)\cong H^1(D,\mathcal{I}^{-k_0}\OO_{N}) \cong \bigoplus_{\mu\geq -k_0} H^1(X,\OO(N^{-\mu})) =0
\end{eqnarray}
by assumption \eqref{eq:assumption}.
Now, let $\omega\in L^2_{0,1}(\Omega^*)$ be $\dq$-closed. Then, there is a representation
$$\omega = \sum_{j=1}^n \iota^* \big( f_j d\o{z_j}\big)$$
where $f_j\in L^2(\Omega^*)$ for $j=1, ..., n$.\\

Since $\pi: M\rightarrow Y\subset\C^n$
is a holomorphic map, the $\pi^*\iota^* d\o{z_j}$ are bounded forms on $M$.
By Lemma \ref{lem:detjac} and the considerations above, 
$\pi^* f_j \in \mathcal{I}^{-k_0}\mathcal{L}_{0,0}(\Omega')$, and it follows
that $\pi^*\omega\in \mathcal{I}^{-k_0} \mathcal{L}_{0,1}(\Omega')$. $\pi^* \omega$
is $\dq$-closed on $M\setminus X$ and locally square integrable if it 
is multiplied with the $k_0$-fold product of a local generator of $\mathcal{I}$.
So, the $\dq$-Extension Theorem \ref{thm:extension} implies that $\dq_{-k_0} \pi^*\omega=0$.
Now, we can apply the vanishing result \eqref{eq:vanishing}. Taking into account
that \eqref{eq:resolution} is a fine resolution, it follows that there exists
$\eta'\in \mathcal{I}^{-k_0}\mathcal{L}_{0,0}(V)$ such that
$\dq_{-k_0} \eta' = \pi^* \omega$ on $V$. Let $V'\subset\subset V$ be a smaller neighborhood of $X$.
Then, by Lemma \ref{lem:detjac},
this means that we have found
$$\eta_0:=(\pi|_{M\setminus X}^{-1})^* \eta' \in L^2(\pi(V')^*)\ , \ \dq\eta_0=\omega \ \mbox{ on } \ \pi(V')^*.$$

This solution can be extended to the whole set $\Omega^*$ by the use
of H\"ormander's $L^2$-theory (see \cite{Hoe}). $\Omega'\subset M$ is a $1$-convex space.
Hence, it carries a strictly plurisubharmonic exhaustion function $\phi: \Omega'\rightarrow[-\infty,\infty)$,
which can be chosen $-\infty$ exactly on $X$ and real analytic outside $X$ (cf. \cite{CoMi}).
Choose $c_0\in\R$ such that 
$$\Omega_{c_0}:=\{z\in\Omega': \phi(z)<c_0\} \subset\subset V'.$$
Let $c_1<c_0$ and $\Phi\in C^\infty(\Omega')$ be a modification of $\phi$
such that $\Phi\equiv\phi$ on $\Omega'\setminus \o{\Omega_{c_1}}$
and $\Phi$ is an exhaustion function of $\Omega'$ which is strictly plurisubharmonic
on $\Omega'\setminus \o{\Omega_{c_1}}$. 
Let $\chi\in C^\infty(M)$ be a smooth cut-off function with compact support in $V'$
which is identically 1 in a neighborhood of $\o{\Omega_{c_0}}$. Let
$$\wt{\omega} := \pi^*\omega - \dq(\chi \eta').$$
Then $\wt{\omega}$ is $\dq$-closed, 
square integrable on $\Omega'$ and identically zero in a neighborhood of $\o{\Omega_{c_0}}$.
So, it follows by \cite{Hoe}, Theorem 3.4.6, 
as in the proof of Proposition 5.1 in \cite{FOV1},
that  there is a solution $\wt{\eta}\in L^2(\Omega')$
such that $\dq \wt{\eta}=\wt{\omega}$ on $\Omega'$. But
$(\pi|_{M\setminus X}^{-1})^* \wt{\eta} \in L^2(\Omega^*)$ by Lemma \ref{lem:detjac}
and
$$\eta_1 := (\pi|_{M\setminus X}^{-1})^* \big(\chi \eta' + \wt{\eta}\big) \in L^2(\Omega^*)$$
is the desired solution $\dq \eta_1=\omega$ on $\Omega^*$.

Now, standard arguments (cf. \cite{FOV1}, Lemma 4.2) imply that
there is a constant $C_\Omega>0$ such that for each $\omega\in L^2_{0,1}(\Omega^*)$
that is $\dq$-closed on $\Omega^*$, there is $\eta\in L^2(\Omega^*)$ 
with $\dq\eta=\omega$ on $\Omega^*$ and
$$\|\eta\|_{L^2(\Omega^*)} \leq C_\Omega \|\omega\|_{L^2_{0,1}(\Omega^*)}.$$

This completes the proof of Theorem \ref{thm:l2}.

\section{The $\dq$-Equation for Bounded $(0,1)$-Forms}

Here again let $Y$ be a pure dimensional analytic set in $\C^n$
with an isolated singularity at the origin
such that the exceptional set $X$ of the desingularization
$\pi: M\rightarrow Y$ is regular. Furthermore, let $\Omega\subset\subset Y$ be strongly pseudoconvex
with $0\in \Omega$, $\Omega^*=\Omega\setminus \{0\}$ and $\Omega'=\pi^{-1}(\Omega)$.

We will now prove Theorem \ref{thm:c0}. Our strategy is quite similar to the proof
of Theorem \ref{thm:l2}: As before, Theorem \ref{thm:embedding} implies
that there is a neighborhood $V\subset\Omega'$ of $X$ in $M$ such that
$$H^1(V,\mathcal{I}^k \OO_M) \cong H^1(N, \mathcal{I}^k \OO_N)$$
for all $k\in\Z$, where $N$ is the normal bundle of $X$ in $M$.  
But this time,
we will be in a position to use Theorem \ref{thm:cohom} in case $k=1$.
Hence, we will make use of
\begin{eqnarray}\label{eq:h11}
H^1(V,\mathcal{I}^1 \OO_M) \cong \bigoplus_{\mu\geq 1} H^1(X, \mathcal{I}^1 \OO(N^{-\mu}),
\end{eqnarray}
where the right hand side vanishes by assumption.\\

Let $\omega\in L^\infty_{0,1}(\Omega^*)$ be $\dq$-closed. 
In order to use \eqref{eq:h11},
we have to show that we can assume $\pi^*\omega\in \mathcal{I}^1\mathcal{L}_{0,1}(V)$.
This is done by a certain regularization procedure. 
There is a representation
$$\omega = \sum_{j=1}^n \iota^* \big( f_j d\o{z_j}\big)$$
where $f_j\in L^\infty(\Omega^*)$ for $j=1, ..., n$, and
$$\max_{j=1, ..., n} \|f_j\|_{L^\infty(\Omega^*)} \leq \|\omega\|_{L^\infty_{0,1}(\Omega^*)}.$$
Let $\Psi: B_r(0)=\{w\in\C^d:\|w\|<r\} \rightarrow \Omega'$ be a nice holomorphic ball
such that $\wt{X}:=\Psi^{-1}(X)=\{w\in B_r(0): w_1=0\}$, and $\Pi=\pi\circ\Psi$. We will always assume that
such a map $\Psi$ extends biholomorphically to a neighborhood of $B_r(0)$.
We will take a closer look at
$$\Pi^*\omega = \sum_{k=1}^d g_k\ d\o{w_k}.$$
Consider $\Pi=(\Pi_1, ..., \Pi_n): B_r(0) \subset \C^d \rightarrow Y \subset \C^n$.
Then the entries of the complex Jacobian of $\Pi$ as a mapping into $\C^n$ are holomorphic functions
$$\Pi_{jk}:=\frac{\partial \Pi_j}{\partial w_k}: B_r(0) \rightarrow \C,$$
and we compute
$$g_k(w) =\sum_{j=1}^n \Pi_{jk}(w) \cdot \Pi^* f_j (w) = \sum_{j=1}^n \Pi_{jk}(w)\cdot f_j(\Pi(w)).$$
Since $\Pi$ contracts $\wt{X}$ to the origin in $\C^n$,
it follows that the holomorphic functions $\Pi_{jk}$ vanish (at least) of order $1$
on $\wt{X}$ if $k\geq 2$.
This implies that for $k\geq 2$
$$|g_k(w)| \leq n\cdot  \sum_{j=1}^n |\Pi_{jk}(w)| \cdot \|\omega\|_{L^\infty_{0,1}(\Omega^*)}.$$
is vanishing of order $1$ along $\wt{X}$. 

Since $\Pi^*\omega \in L^\infty_{0,1}(B_r(0))$
is $\dq$-closed by the $\dq$-Extension Theorem \ref{thm:extension},
there exists a solution $\eta\in C^{1/2}(\o{B_r(0)})$, $\dq \eta=\Pi^*\omega$.
Let $w'=(w_2, ..., w_d)$. Then
$$\dq_{w'} \eta (w) = \sum_{k=2}^d g_k(w) d\o{w_k}\ \ \mbox{ on }\ B_r(0)\cap\{w_1=c\}$$
for almost all $c$. But here, the coefficients $g_k$, $k\geq 2$, vanish if $w_1\rightarrow 0$.
So, $\eta$ is a holomorphic function on $\wt{X}$ and 
$$\wt{\eta} (w):=\eta(w) - \eta(0,w_2, ..., w_d)\ \in C^{1/2}(\o{B_r(0)})$$
is also a solution $\dq \wt{\eta}=\Pi^*\omega$ on $B_r(0)$,
and vanishing of order $1/2$ on $\wt{X}$.\\

Let the neighborhood $V$ of $X$ be covered by $L$ holomorphic balls
$\Psi_l: B_r(0)\rightarrow M$,
and $\{\chi_l\}_{l=1}^L$ be a smooth partition of unity subordinate to that cover. Let
$$u_0:= \sum_{l=1}^L \chi_l\cdot (\Psi_l^{-1})^* \wt{\eta_l},$$
where the $\wt{\eta_l}$ are solutions of $\dq \wt{\eta_l} = \Psi^*_l \pi^* \omega$. 
By the considerations above,
we can assume that $u_0\in C^{1/2}(\o{V})$ is vanishing of order $1/2$ on $X$.
Shrink $V$ if necessary.
The continuous function $u_0$ is the first part of the solution of $\dq u=\pi^* \omega$ 
on $V$ we are looking for.
Let
$$\omega_0 := \sum_{l=1}^L \dq \chi_l \wedge (\Psi_l^{-1})^* \wt{\eta_l}.$$
Then $\dq u_0 = \pi^*\omega + \omega_0$, and $\omega_0\in C^{1/2}_{(0,1)}(\o{V})$ is $\dq$-closed
and vanishing of order $1/2$ on $X$.
We have thus reduced the problem to solving the $\dq$-equation for $\omega_0$.
Now, we will repeat this regularization procedure.

Let $\Psi: B_r(0) \rightarrow V$ be again a nice holomorphic ball such that
$$\wt{X}=\Psi^{-1}(X)=\{w\in B_r(0): w_1=0\}.$$
$\Psi^*\omega_0$ is bounded, $\dq$-closed and vanishing of order $1/2$ on $\wt{X}$.
This implies that $w_1^{-1}\cdot \Psi^*\omega_0$ is still square integrable,
and by the $\dq$-Extension Theorem \ref{thm:extension}, it is also $\dq$-closed.
So, there is a function $\eta_0\in L^2(B_r(0))$ such that $\dq\eta_0 = w_1^{-1}\Psi^*\omega_0$.
Let $\wt{\eta_0}:= w_1\cdot \eta_0$. 
Then $(\Psi^{-1})^*\wt{\eta_0} \in \mathcal{I}^1 \mathcal{L}_{0,0}(\Psi(B_{r'}(0)))$
for $0<r'<r$, and $\dq (\Psi^{-1})^*\wt{\eta_0} = \omega_0$ where it is defined.
So, repeat the construction of $u_0$ and $\omega_0$ with such local solutions:
$$u_1:= \sum_{l=1}^L \chi_l\cdot (\Psi_l^{-1})^* \wt{\eta_{0,l}}\ \ \mbox{ and } \ \ 
\omega_1 := \sum_{l=1}^L \dq \chi_l \wedge (\Psi_l^{-1})^* \wt{\eta_{0,l}}.$$
Then $\dq(u_0-u_1)=\pi^*\omega-\omega_1$, and
$\omega_1\in \mathcal{I}^1\mathcal{L}_{0,1}(V)$ is $\dq$-closed and $\dq_1$-closed.
Again, one may shrink $V$ if necessary. So, after two steps of the regularization procedure,
we have reduced the problem to solving the $\dq$-equation for $\omega_1\in \mathcal{I}^1\mathcal{L}_{0,1}(V)$.\\


But, as we have already realized (see \eqref{eq:h11}), $H^1(V,\mathcal{I}^1\OO_M)=0$.
Hence, there is $u_2\in \mathcal{I}^1\mathcal{L}_{0,0}(V)$ such that $\dq u_2=\dq_1 u_2=\omega_1$.
Then, $u:=u_0-u_1+u_2$ satisfies the equation $\dq u=\pi^*\omega$ on $V$.
But, as $\pi^*\omega$ is a bounded $(0,1)$-form, local regularity results for the $\dq$-equation
imply that $u\in C^0(V)$. Therefore, 
$$u_2-u_1\in \mathcal{I}^1\mathcal{L}_{0,0}(V)\cap C^0(V),$$
and this implies that $u_2-u_1$ is vanishing on $X$ (as is $u_0$). So, we have found
$$v':=(\pi^{-1})^* u \in C^0(\pi(V))\ \ , \ \dq v'=\omega\ \mbox{ on } \pi(V)^*.$$

As in the proof of Theorem 1.5.19 in \cite{HeLe1}, there exists a strictly plurisubharmonic function
$\rho$ in a neighborhood of $\o{\Omega}$ such that $\Omega=\{z\in Y: \rho(z)<1\}$ and 
$$\Omega_\epsilon=\{z\in Y: \rho(z)<\epsilon\}\subset \subset \pi(V)$$
for some $\epsilon>0$, $\epsilon$ a regular value of $\rho$.
By use of Grauert's bump method, it follows as in \cite{HeLe1}, Chapter 2.12,
that there is $v\in C^0(\overline{\Omega})$
with $\dq v=\omega$ on $\Omega^*$.

Again, \cite{FOV1}, Lemma 4.2, implies that
there is a constant $C_\Omega>0$ such that for each $\omega\in L^\infty_{0,1}(\Omega^*)$
that is $\dq$-closed on $\Omega^*$, there is $v \in C^0(\overline{\Omega})$ 
with $\dq\eta=\omega$ on $\Omega^*$ and
$$\|v\|_{C^0(\o{\Omega})} \leq C_\Omega \|\omega\|_{L^\infty_{0,1}(\Omega^*)}.$$


\newpage
\section{Homogeneous Varieties with Isolated Singularities}

In this section, we give the proof of Theorem \ref{thm:hoelder}, 
which is an easy consequence of Theorem \ref{thm:c0} and 
a $\dq$-integration formula for weighted homogeneous varieties (see \cite{RuZe}).
Before, we make a few remarks about the blow up of an isolated singularity
of a homogeneous variety for we do not know a suitable reference.

So, let $Y$ be an irreducible cone in $\C^n$ of pure dimension $d$ with an isolated singularity at the origin,
and $\Omega\subset\subset Y$ strongly pseudoconvex with $0\in\Omega$, and $\Omega^*=\Omega\setminus\{0\}$.
Then, as $Y$ is a homogeneous variety, it defines also a regular complex manifold $X'$ in $\C\mathbb{P}^{n-1}$.
Let $Y$ (respectively $X'$) be given by $L$ homogeneous holomorphic polynomials $P_1, ..., P_L$,
$$\Jac_1 P(z) := \left(\frac{\partial P_j}{\partial z_k}(z)\right)_{\substack{j=1, ..., L\\k=2, ..., n}}.$$
Then, by the Jacoby Criterion (cf. \cite{GrRe}), 
$\mbox{rank} \Jac_1 P(z)=n-d$, where $z_1\neq 0$
because $X'$ is regular in $\C\mathbb{P}^{n-1}$. Let's consider the blow up of the origin in $\C^n$:
$$A:=\{(z,w)\in \C^n\times\C\mathbb{P}^{n-1}: w_j z_k = w_k z_j \mbox{ for all } j\neq k\}$$
with the projection $\Pi: A \rightarrow \C^n, (z,w)\mapsto z$.
Let $M:=\o{\Pi^{-1}(Y\setminus\{0\})}$. 
Then $M$ is given in $A$ as the zero set of the polynomials $P_1, ..., P_L$ as
polynomials in $w$, and $\Pi^{-1}(\{0\})\cong X'$. To see this, note that in a chart 
$U_1:=\{(z,w):w_1=1\}$ we have
$P_j(z)=z_1^{\deg P_j} P(w)$,
and the component $\{z_1=0\}$ is not contained in $M$.\\

We claim that $M$ is a regular submanifold in $\C^n\times\C\mathbb{P}^{n-1}$.
That can be seen as follows:
It is enough to work in a chart $U_1:=\{(z,w):w_1=1\}$.
Then $M$ is defined in $U_1$ by the $n-1+L$ equations
$P_1(w)=0$, ..., $P_N(w)=0$, and $Q_j(z,w):=z_j-w_j z_1=0$ for $j=2, ..., n$.
Let $\wt{w}=(w_2, ..., w_n)$. Then 
$$\Jac_z Q(z,w) := \left(\frac{\partial Q_j}{\partial z_k}(z,w)\right)_{\substack{j=2, ..., n\\k=1, ..., n}}
=\big( \wt{w}^t\ E_{n-1}\big),$$
where $E_{n-1}$ is the unit matrix.
So, in order to check the Jacobi Criterion, we have to consider the following matrix:
$$\left(\begin{array}{cc}
\partial Q_j(w,z)/\partial z_k & \partial Q_j(w,z)/\partial w_l\\
\partial P_m(w)/\partial z_k & \partial P_m(w)/\partial w_l
\end{array}\right)_{\substack{j=2, .., n; m=1, .., L\\ k=1,.., n; l=2, .., n}} = 
\left(\begin{array}{cc}(\wt{w}^t\ E_{n-1}) & *\\
0 & \Jac_1 P(w)\end{array}\right),$$
which is of rank $(n-1)+(n-d)$. So, $M$ is in fact regular ($\dim M=d$),
and we see that $\pi:=\Pi|_M: M\rightarrow Y$ is a desingularization
with regular exceptional set $X:=\pi^{-1}(\{0\})\cong X'$.
Now, we can apply Theorem \ref{thm:c0}.\\

Assume that $\omega\in L^\infty_{0,1}(\Omega^*)$ is $\dq$-closed on $\Omega^*$, 
and we have found $v \in C^0(\overline{\Omega})$ such that $\dq v=\omega$ on $\Omega^*$ and
$$\|v\|_{C^0(\o{\Omega})} \leq C_\Omega \|\omega\|_{L^\infty_{0,1}(\Omega^*)},$$
where $C_\Omega>0$ is the constant from Theorem \ref{thm:c0}.
Now, choose a smooth cut-off function $\chi$ with compact support in $\Omega$ which 
is identically $1$ in a neighborhood of the origin.
Then,
$$\omega':= \dq (\chi v) \in L^\infty_{0,1}(\Omega^*)$$
is $\dq$-closed on $\Omega^*$ and has compact support in $\Omega$.
Then, by the use of Theorem 3 in \cite{RuZe}, it follows that 
there exists $g\in C^0(Y)$ such that the following is true:
$$\dq g = \omega'\ \ \mbox{ on } Y^*,$$
and for each $0<\theta<1$,
there exists a constant $C(\Omega)_\theta>0$
which does not depend on $\omega'$ and $g$, such that
$$\|g\|_{C^\theta(\o{\Omega})} \leq C(\Omega)_\theta\ \|\omega'\|_{L^\infty_{0,1}(\Omega^*)} 
\lesssim \|\omega\|_{L^\infty_{0,1}(\Omega^*)}.$$
So, let $\eta:= g + (1-\chi) v$.

Then $\dq \eta = \omega$ on $\Omega^*$. The function $(1-\chi) v$ is vanishing in a neighborhood
of the origin and is satisfying the $\dq$-equation with a bounded $(0,1)$-form
on the right hand side on the domain $\Omega$ which has a strongly pseudoconvex boundary.
Hence, $g$, $(1-\chi)v$ and consequently $\eta$, too,  belong to $C^{1/2}(\o{\Omega})$.
Once again, an application of  \cite{FOV1}, Lemma 4.2, finishes the proof of Theorem \ref{thm:hoelder}.\\

We will now investigate necessary conditions for solvability of the $\dq$-equation
in the $L^\infty$-sense on such varieties.
So, 
we still assume that $Y$ is a pure $d$-dimensional irreducible homogeneous variety
in $\C^n$ with an isolated singularity at the origin, and the desingularization $\pi: M\rightarrow Y$
is given by the blow up of the origin. In this situation, we remark
that the index $k_0(Y)=d-1$ in Theorem \ref{thm:l2}.\\

Now, let $V$ be an open neighborhood of $X$ in $M$, and assume that the $\dq$-equation
is solvable in the $L^\infty$-sense on $\Reg \pi(V)=\pi(V)^*$ as in Theorem \ref{thm:c0}.
Then it follows that
$$H^1(M,\mathcal{I}^1 \OO_M) \cong \bigoplus_{\mu\geq 1} H^1(X,\OO(N^{-\mu})) = 0.$$
This can be seen as follows: Let $[\omega]\in H^1(M,\mathcal{I}^1\OO_M)$
be represented by the $\dq$-closed $(0,1)$-form $\omega\in \mathcal{I}^1 \mathcal{C}^\infty_{0,1}(M)$.
It is easy to see that
$$\omega':=(\pi|_{V\setminus X}^{-1})^*\omega|_{V\setminus X} \in L^\infty_{0,1}(\pi(V)^*).$$
$\omega'$ is $\dq$-closed, and by assumption there exists
$\eta'\in L^\infty(\pi(V)^*)$
such that $\dq\eta'=\omega'$. So, we set
$$\eta:=\pi|_{V\setminus X}^* \eta' \in L^\infty(V\setminus X),$$
and it follows that
\begin{eqnarray}\label{eq:dq0}
\dq \eta = \omega \ \ \mbox{ on } V
\end{eqnarray}
by the Extension Theorem \ref{thm:extension} if we extend $\eta$ trivially across $X$.
But the right hand side of \eqref{eq:dq0} is smooth, and so we can assume $\eta\in C^\infty(V)$
after modifying it on a zero set. This implies that $\eta$ is a holomorphic function on the compact
complex manifold $X$, hence constant on $X$, and we can assume furthermore that it is vanishing on $X$.
But now, we deduce that $\eta\in \mathcal{I}^1 \mathcal{C}^\infty(V)$. The reason is as follows:
Let $Q\in X$. Then there exists a neighborhood $U_Q$ of $Q$ where we can find a solution
$f\in \mathcal{I}^1 \mathcal{C}^\infty(U_Q)$ such that $\dq f= \omega$ on $U_Q$.
So, $f-\eta$ is a holomorphic function on $U_Q$, vanishing on $X$, hence in $\mathcal{I}^1\OO(U_Q)$.
But then $\eta\in \mathcal{I}^1 \mathcal{C}^\infty(U_Q)$, too.
As in the proof of Theorem \ref{thm:cohom} (where we have used the results of Henkin and Leiterer),
this solution can be extended to the whole manifold $M$, and so:

\begin{thm}\label{thm:equivalent}
Let $Y$ be an irreducible homogeneous variety in $\C^n$ with an isolated singularity at the origin,
$X$ the exceptional set of the blow up at the origin, and $N$ the normal bundle of $X$ in this desingulaization.
If $\Omega\subset\subset Y$ is a strongly pseudoconvex domain with $0\in\Omega$ and $\Omega^*=\Omega\setminus\{0\}$,
then there exists a bounded linear $\dq$-solution operator
$${\bf S}_1: L^\infty_{0,1}(\Omega^*)\cap \ker\dq \rightarrow L^\infty(\Omega^*)$$
exactly if
$$H^1(X, \OO(N^{-\mu}))=0\ \ \mbox{ for all }\ \ \mu\geq 1.$$
\end{thm}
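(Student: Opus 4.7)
The theorem is a bi-implication. The ``$\Leftarrow$'' direction is immediate: under the cohomological hypothesis, Theorem \ref{thm:c0} already constructs a bounded linear solution operator into $C^0(\overline{\Omega}) \subset L^\infty(\Omega^*)$, which serves as $\mathbf{S}_1$. The real content is therefore the ``$\Rightarrow$'' direction, and my plan is to reverse-engineer the machinery of Theorems \ref{thm:embedding} and \ref{thm:cohom}: existence of $\mathbf{S}_1$ should force the vanishing of $H^1(V, \mathcal{I}^1 \OO_M)$ for a suitable neighborhood $V$ of $X$, and the latter is identified with $\bigoplus_{\mu \geq 1} H^1(X, \OO(N^{-\mu}))$ by Theorem \ref{thm:cohom} (via the biholomorphism of a neighborhood of the zero section of $N$ with $V$).

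So I would fix a class $[\omega] \in H^1(V, \mathcal{I}^1 \OO_M)$ and represent it, via the smooth fine resolution of $\mathcal{I}^1 \OO_M$, by a $\dq$-closed form $\omega \in \mathcal{I}^1 \mathcal{C}^\infty_{0,1}(V)$ (extended smoothly as needed). Push it forward to $\omega' := (\pi|_{V \setminus X}^{-1})^* \omega$, which is a $\dq$-closed $(0,1)$-form on $\pi(V)^*$; applying $\mathbf{S}_1$ on $\Omega^*$ (after trivial extension of $\omega'$ if necessary) produces a bounded solution $\eta'$. Pull back to $\eta := \pi^* \eta' \in L^\infty(V \setminus X)$ and extend by zero across $X$. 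Theorem \ref{thm:extension} (codimension $k = 1$, needing $L^2$-integrability which $L^\infty$ supplies locally) then guarantees $\dq \eta = \omega$ distributionally on all of $V$, and elliptic regularity makes $\eta$ smooth.

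To finish, note that $\eta|_X$ is holomorphic on the compact connected manifold $X$, hence constant; subtracting the constant, $\eta$ vanishes on $X$. To upgrade this to $\eta \in \mathcal{I}^1 \mathcal{C}^\infty(V)$, solve $\dq$ locally inside $\mathcal{I}^1 \mathcal{C}^\infty$ near each $Q \in X$ to produce $f_Q$; the difference $\eta - f_Q$ is holomorphic and vanishes on $X \cap U_Q$, so lies in $\mathcal{I}^1 \OO$, whence $\eta$ lies in $\mathcal{I}^1 \mathcal{C}^\infty$ near $Q$. Globalizing yields $[\omega] = 0$ in $H^1(V, \mathcal{I}^1 \OO_M)$. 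Passing to all of $M$ (if desired) is routine by the $(d-1)$-convex extension argument used in Theorem \ref{thm:cohom}, and the isomorphism of that theorem then translates the vanishing into the statement on $X$.

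The main obstacle is the claim that $\omega'$ is genuinely bounded on $\pi(V)^*$. Since $\pi$ contracts $X$ to the origin, the induced metric on $Y^*$ is badly distorted near $0$ compared to a hermitian metric on $M$, and an arbitrary smooth form on $M$ need not descend to an $L^\infty$ form on $Y^*$. The balance tips correctly precisely because $\omega$ vanishes to order one along $X$ (being in $\mathcal{I}^1$), which matches the order-one factor gained from the inverse Jacobian in any nice holomorphic polydisc (as computed in Section 7: the entries $\Pi_{jk}$ with $k \geq 2$ vanish on $\{w_1 = 0\}$). Once this matching is verified in local coordinates, everything else is essentially bookkeeping with the tools already constructed, and in particular the delicate injectivity step inside the proof of Theorem \ref{thm:cohom} (which required the Forn{\ae}ss--{\O}vrelid--Vassiliadou truncation) plays no role here, since we work with a specific $[\omega]$ for which the local analysis is carried out directly.
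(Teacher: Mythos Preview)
Your proposal is correct and follows essentially the same argument as the paper: represent a class in $H^1$ of $\mathcal{I}^1\OO_M$ by a smooth $\dq$-closed form vanishing along $X$, push it down to a bounded form on $\pi(V)^*$, solve there, pull back, apply the extension theorem and regularity, subtract the constant on $X$, and compare with a local $\mathcal{I}^1$-solution to conclude $\dq_1$-exactness --- then invoke Theorem~\ref{thm:cohom}. The only wrinkle is the phrase ``after trivial extension of $\omega'$'': extending by zero to $\Omega^*$ would destroy $\dq$-closedness, so you should instead (as you in fact indicate at the end) use the $(d-1)$-convex extension isomorphism $H^1(V,\mathcal{I}^1\OO_M)\cong H^1(\Omega',\mathcal{I}^1\OO_M)$ to represent the class on all of $\Omega'$ from the outset, whereupon $\omega'$ already lives on $\Omega^*$ and $\mathbf{S}_1$ applies directly.
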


It is interesting to combine such results with Scheja's extension Theorem \ref{thm:scheja} 
in order to obtain statements about the exceptional set.\\

In the $L^2$ situation, we have seen in Theorem \ref{thm:l2} that vanishing of 
the groups 
$$H^1(X, \OO(N^{-\mu}))\ \ \mbox{ for all }\ \mu\geq 1-d$$
is a sufficient condition
for $L^2$-solvability of the $\dq$-equation (where $d=\dim Y$) on such spaces.

By using the method above, we would obtain 
$$H^1(X, \OO(N^{-\mu}))=0\ \ \mbox{ for all }\ \mu\geq 2-d$$
as a necessary condition,
and end with a gap at $H^1(X,\OO(N^{1-d}))$.
So, we forgo working out that statement in detail.

\section{Examples of Cones with Isolated Singularities}

We will now discuss some simple examples which allow the application
of Theorem \ref{thm:c0}, Theorem \ref{thm:hoelder} and Theorem \ref{thm:equivalent}.
Let $X'$ be a compact Riemannian Surface in $\C\mathbb{P}^{n-1}$
which is biholomorphically equivalent to $\C\mathbb{P}^1$. 
Let $Y$ be the cone in $\C^n$ associated to $X'$ as in the previous section,
and $X\cong X'\cong \C\mathbb{P}^1$
the exceptional set of the blow up at the origin.
Let $z_0\in X$ be an arbitrary point 
and $D= - (z_0)$ the associated divisor.
Then it follows that
\begin{eqnarray*}
H^j(X,\OO(D^\mu))\cong H^j(X,\OO(N^\mu))
\end{eqnarray*}
for all $j\geq 0$ and $\mu\in\Z$, where $N$ is the normal bundle of $X$ in the desingularization $\pi: M\rightarrow Y$.
It is well-known (and easy to calculate by power series)
that
$$\dim H^0(X,\OO(D^\mu)) = 1 - \mu\ \ \mbox{ for } \ \mu\leq 1.$$
Hence, we calculate by the Theorem of Riemann-Roch that
\begin{eqnarray*}
- \dim H^1(X,\OO(N^\mu)) = \deg D^\mu + 1 - \mbox{genus}(\C\mathbb{P}^1) - (1-\mu) = 0
\end{eqnarray*}
for all $\mu\leq 1$, which guarantees $L^2$ and $L^\infty$-solvability of the $\dq$-equation on $Y$, because it follows that
$$H^1(M,\mathcal{I}^{-1}\OO_M) \cong \bigoplus_{\mu\geq -1} H^1(X,\OO(N^{-\mu})) =0.$$
An important example for such a variety is $Y=\{(x,y,z)\in\C^3: xy=z^2\}$.

\vspace{2mm}
As a second example, we use the same construction but assume that $X\cong X'$ is an elliptic curve.
Here, $H^0(X,\OO(D^\mu))$ is the space of elliptic functions with a single pole of order $-\mu$.
So, it is well-known that we have
$$\dim H^0(X,\OO(D^\mu)) =\left\{
\begin{array}{ll}
1 &, \mbox{ for } \mu = 0,\\
-\mu &, \mbox{ for } \mu\leq -1.
\end{array}\right.$$
Using the Theorem of Riemann-Roch again, we calculate
\begin{eqnarray*}
- \dim H^1(X,\OO(N^\mu)) = \deg D^\mu + 1 - \mbox{genus}(X) - (-\mu)
= -\mu + 1 - 1 +\mu=0
\end{eqnarray*}
for all $\mu\leq -1$, and
$L^\infty$-solvability of the $\dq$-equation on $Y$ follows from
$$H^1(M,\mathcal{I}^1\OO_M) \cong \bigoplus_{\mu\geq 1} H^1(X,\OO(N^{-\mu})) =0.$$
Examples are the varieties $Y=\{(x,y,z)\in\C^3: y^2z=x^3+axz^2 + bz^3\}$
for suitable values of $a$ and $b$.

\newpage
\section{A Remark on the Cohomology of the Desingularization}

It is worth mentioning explicitly that we have proven 
$$H^q(M,\OO_M) \cong \bigoplus_{\mu\geq 0} H^q(X,\OO(N^{-\mu}))\ \ \mbox{ for } q\geq 1,$$
where $N$ is the normal bundle of $X$ in $M$. That can be seen as follows:\\

Theorem \ref{thm:embedding} implies
that there is a neighborhood $V$ of the zero section in $N$
which is biholomorphically equivalent to a neighborhood $\Psi(V)$ of $X$ in $M$.
Consider $\phi: M\rightarrow \R$ given by $\phi(w):=\|\pi(w)\|^2$.
Then $\phi$ is a smooth exhaustion function on $M$ which is strictly plurisubharmonic on $M\setminus X$,
and there exists $c_0>0$ such that 
$$D_0:=\{w\in M: \phi(w)<c_0\}\subset\subset \Psi(V)$$
has smooth boundary. Then, by \cite{Hoe}, Theorem 3.4.9, 
$$H^q(D_0,\OO_M) \cong H^q(M,\OO_M)\ \ ,\ q\geq 1.$$
But on the other hand
$$D_1:= \Psi^{-1}(D_0) \subset\subset V \subset\subset N$$
is a strongly pseudoconvex neighborhood of the zero section in the negative line bundle $N$,
and therefore Theorem \ref{thm:cohom} yields:
$$H^q(D_0,\OO_M) \cong H^q(D_1,\OO_N) \cong \bigoplus_{\mu\geq 0} H^q(X,\OO(N^{-\mu}))\ \ \mbox{ for } q\geq 1.$$
Summing up, we get:

\begin{thm}\label{thm:cohomM}
Let $Y$ be a pure dimensional analytic set in $\C^n$
with an isolated singularity at the origin
such that the exceptional set $X$ of a desingularization
$\pi: M\rightarrow Y$ is regular. Then:
$$H^q(M,\OO_M) \cong \bigoplus_{\mu\geq 0} H^q(X,\OO(N^{-\mu}))\ \ \mbox{ for } q\geq 1,$$
where $N$ is the normal bundle of $X$ in $M$.
\end{thm}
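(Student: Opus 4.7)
The plan is to assemble the statement from two ingredients already proved: the embedding of a neighborhood of $X$ into the normal bundle $N$ (Theorem \ref{thm:embedding}), and the cohomology expansion on negative line bundles (Theorem \ref{thm:cohom} with $k=0$). The only work is to bridge between the cohomology of $M$ and the cohomology of a strongly pseudoconvex sublevel set that actually lies inside the image of the embedding.

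First I would fix the biholomorphism $\Psi\colon V\to\Psi(V)\subset M$ provided by Theorem \ref{thm:embedding}, where $V\subset N$ is a neighborhood of the zero section. Next, I would construct a global exhaustion function on $M$ by pulling back the squared Euclidean norm: $\phi(w):=\|\pi(w)\|^{2}$. Since $\pi$ is proper and $Y$ is closed in $\C^{n}$, $\phi$ is a proper exhaustion of $M$; since $\pi$ is biholomorphic off $X$ and $Y^{*}$ is a hermitian submanifold of $\C^{n}$, $\phi$ is strictly plurisubharmonic on $M\setminus X$ and vanishes exactly on $X$. Picking a regular value $c_{0}>0$ small enough that the sublevel set
\[
D_{0}:=\{w\in M:\phi(w)<c_{0}\}
\]
is relatively compact in $\Psi(V)$ gives a smoothly bounded, strongly pseudoconvex neighborhood of $X$ in $M$.

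Now I would invoke two transfer principles. On the $M$-side, $M$ is $1$-convex with exceptional set $X\subset D_{0}$, so $M$ is a $(d-1)$-convex extension of $D_{0}$ in the sense of Henkin--Leiterer (equivalently, apply \cite{Hoe}, Theorem 3.4.9, to the strictly plurisubharmonic exhaustion $\phi$ outside $D_{0}$). This yields
\[
H^{q}(M,\OO_{M})\cong H^{q}(D_{0},\OO_{M}),\qquad q\geq 1.
\]
On the $N$-side, $D_{1}:=\Psi^{-1}(D_{0})\subset\subset V\subset N$ is a smoothly bounded strongly pseudoconvex neighborhood of the zero section, and Theorem \ref{thm:cohom} applied with $k=0$ (so $\mathcal{I}^{0}\OO_{N}=\OO_{N}$) provides
\[
H^{q}(D_{1},\OO_{N})\cong\bigoplus_{\mu\geq 0}H^{q}(X,\OO(N^{-\mu})),\qquad q\geq 1.
\]
Since $\Psi$ is biholomorphic, $H^{q}(D_{0},\OO_{M})\cong H^{q}(D_{1},\OO_{N})$, and chaining the three isomorphisms yields the claim.

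The only step requiring any care is the choice of $c_{0}$: one must verify that $\phi$ is indeed a proper exhaustion that is strictly plurisubharmonic outside $X$ (so that the Henkin--Leiterer/Hörmander extension result applies) and simultaneously that small sublevel sets sit inside $\Psi(V)$. Both follow once one notes that $X=\phi^{-1}(0)$ and $X\subset\Psi(V)$ is compact, so any sufficiently small regular value $c_{0}$ of $\phi$ works. Everything else is a direct citation of already-established machinery, so I do not expect a genuine obstacle.
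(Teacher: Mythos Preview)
Your proposal is correct and follows essentially the same route as the paper: define $\phi=\|\pi(\cdot)\|^{2}$, take a small strongly pseudoconvex sublevel set $D_{0}\subset\Psi(V)$, use H\"ormander's Theorem 3.4.9 to identify $H^{q}(M,\OO_{M})\cong H^{q}(D_{0},\OO_{M})$, and then apply Theorem \ref{thm:cohom} with $k=0$ on $D_{1}=\Psi^{-1}(D_{0})$. The only cosmetic difference is that you mention the Henkin--Leiterer formulation alongside H\"ormander; the paper cites only the latter.
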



\end{document}